\title{Preassociative aggregation functions}
\author{Jean-Luc Marichal}
\address{Mathematics Research Unit, FSTC, University of Luxembourg \\
6, rue Coudenhove-Kalergi, L-1359 Luxembourg, Luxembourg} \email{jean-luc.marichal[at]uni.lu }
\author{Bruno Teheux}
\address{Mathematics Research Unit, FSTC, University of Luxembourg \\
6, rue Coudenhove-Kalergi, L-1359 Luxembourg, Luxembourg} \email{bruno.teheux[at]uni.lu }
\date{March 13, 2015}
\begin{document}

\theoremstyle{plain}
\newtheorem{theorem}{Theorem}[section]
\newtheorem{lemma}[theorem]{Lemma}
\newtheorem{proposition}[theorem]{Proposition}
\newtheorem{corollary}[theorem]{Corollary}
\newtheorem{fact}[theorem]{Fact}
\newtheorem*{main}{Main Theorem}

\theoremstyle{definition}
\newtheorem{definition}[theorem]{Definition}
\newtheorem{example}[theorem]{Example}

\theoremstyle{remark}
\newtheorem*{conjecture}{Conjecture}
\newtheorem{remark}{Remark}
\newtheorem{claim}{Claim}

\newcommand{\N}{\mathbb{N}}
\newcommand{\Q}{\mathbb{Q}}
\newcommand{\R}{\mathbb{R}}

\newcommand{\ran}{\mathrm{ran}}
\newcommand{\dom}{\mathrm{dom}}
\newcommand{\id}{\mathrm{id}}
\newcommand{\med}{\mathrm{med}}
\newcommand{\Ast}{\boldsymbol{\ast}}
\newcommand{\Cdot}{\boldsymbol{\cdot}}

\newcommand{\bfu}{\mathbf{u}}
\newcommand{\bfv}{\mathbf{v}}
\newcommand{\bfw}{\mathbf{w}}
\newcommand{\bfx}{\mathbf{x}}
\newcommand{\bfy}{\mathbf{y}}
\newcommand{\bfz}{\mathbf{z}}

\begin{abstract}
The classical property of associativity is very often considered in aggregation function theory and fuzzy logic. In this paper we provide axiomatizations of various classes of preassociative functions, where preassociativity is a generalization of associativity recently introduced by the authors. These axiomatizations are based on existing characterizations of some noteworthy classes of associative operations, such as the class of Acz\'elian semigroups and the class of t-norms.
\end{abstract}

\keywords{Aggregation, Associativity, Preassociativity, Functional equation, Axiomatization}

\subjclass[2010]{20M99, 39B72}

\maketitle

\section{Introduction}

Let $X$ be an arbitrary nonempty set (e.g., a nontrivial real interval) and let $X^*=\bigcup_{n \geqslant 0} X^n$ be the set of all tuples on $X$, with the convention that $X^0=\{\varepsilon\}$ (i.e., $\varepsilon$ denotes the unique $0$-tuple on $X$). The \emph{length} $|\bfx|$ of a tuple $\bfx\in X^*$ is a nonnegative integer defined in the usual way: we have $|\bfx|=n$ if and only if $\bfx\in X^n$. In particular, we have $|\varepsilon|=0$.

In this paper we are interested in \emph{$n$-ary} functions $F\colon X^n\to Y$, where $n\geqslant 1$ is an integer, as well as in \emph{variadic} functions $F\colon X^* \to Y$, where $Y$ is a nonempty set. A variadic function $F\colon X^* \to Y$ is said to be \emph{standard} \cite{LehMarTeh} if the equality $F(\bfx)=F(\varepsilon)$ holds only if $\bfx =\varepsilon$. Finally, a variadic function $F\colon X^*\to X\cup\{\varepsilon\}$ is called a \emph{variadic operation on $X$} (or an \emph{operation} for short), and we say that such an operation is \emph{$\varepsilon$-preserving standard} (or \emph{$\varepsilon$-standard} for short) if it is standard and satisfies $F(\varepsilon)=\varepsilon$.

For any variadic function $F\colon X^*\to Y$ and any integer $n\geqslant 0$, we denote by $F_n$ the \emph{$n$-ary part} of $F$, i.e., the restriction $F|_{X^n}$ of $F$ to the set $X^n$. The restriction $F|_{X^*\setminus\{\varepsilon\}}$ of $F$ to the tuples of positive lengths is denoted $F^{\flat}$ and called the \emph{non-nullary part} of $F$. Finally, the value $F(\varepsilon)$ is called the \emph{default value} of $F$.

The classical concept of associativity for binary operations can be easily generalized to variadic operations in the following way. A variadic operation $F \colon X^* \to X\cup\{\varepsilon\}$ is said to be \textit{associative} \cite{LehMarTeh,MarTeh} (see also \cite[p.~24]{Mar98}) if
\begin{equation}\label{eq;assoc}
F(\bfx,\bfy,\bfz) = F(\bfx,F(\bfy),\bfz){\,},\qquad \bfx,\bfy,\bfz \in X^*.
\end{equation}
Here and throughout, for tuples $\bfx=(x_1,\ldots,x_n)$ and $\bfy=(y_1,\ldots,y_m)$ in $X^*$, the notation $F(\bfx,\bfy)$ stands for the function $F(x_1,\ldots,x_n,y_1,\ldots,y_m)$, and similarly for more than two tuples. We also assume that $F(\varepsilon,\bfx)=F(\bfx,\varepsilon)=F(\bfx)$ for every $\bfx\in X^*$.

Any associative operation $F \colon X^* \to X\cup\{\varepsilon\}$ clearly satisfies the condition $F(\varepsilon)=F(F(\varepsilon))$. From this observation it follows immediately that any associative standard operation $F \colon X^* \to X\cup\{\varepsilon\}$ is necessarily $\varepsilon$-standard.

Associative binary operations and associative variadic operations are widely investigated in aggregation function theory, mainly due to the many applications in fuzzy logic (for general background, see \cite{GraMarMesPap09}).


Associative $\varepsilon$-standard operations $F\colon X^*\to X\cup\{\varepsilon\}$ are closely related to associative binary operations $G\colon X^2\to X$, which are defined as the solutions of the functional equation
$$
G(G(x,y),z) ~=~ G(x,G(y,z)),\qquad x,y,z\in X.
$$
In fact, it can be easily seen \cite{MarTeh,MarTeh2} that a binary operation $G\colon X^2\to X$ is associative if and only if there exists an associative $\varepsilon$-standard operation $F\colon X^*\to X\cup\{\varepsilon\}$ such that $G=F_2$. Moreover, as observed in \cite[p.~25]{Mar98} (see also \cite[p.~15]{BelPraCal07} and \cite[p.~33]{GraMarMesPap09}), any associative $\varepsilon$-standard operation $F\colon X^*\to X\cup\{\varepsilon\}$ is completely determined by its unary and binary parts. Indeed, by associativity we have
\begin{equation}\label{eq:saf76sf5xx}
F_n(x_1,\ldots, x_n) ~=~ F_2(F_{n-1}(x_1,\ldots, x_{n-1}),x_n),\qquad n\geqslant 3,
\end{equation}
or equivalently,
\begin{equation}\label{eq:saf76sf5}
F_n(x_1,\ldots, x_n) ~=~ F_2(F_2(\ldots F_2(F_2(x_1,x_2),x_3),\ldots), x_n),\qquad n\geqslant 3.
\end{equation}


In this paper we are interested in the following generalization of associativity recently introduced by the authors in \cite{MarTeh,MarTeh2} (see also \cite{LehMarTeh}).

\begin{definition}[{\cite{MarTeh,MarTeh2}}]\label{de:7ads5sa}
A function $F\colon X^*\to Y$ is said to be \emph{preassociative} if for every $\bfx,\bfy,\bfy',\bfz\in X^*$ we have
$$
F(\bfy) ~=~ F(\bfy')\quad\Rightarrow\quad F(\bfx,\bfy,\bfz) ~=~ F(\bfx,\bfy',\bfz).
$$
\end{definition}

We can easily observe that any $\varepsilon$-standard operation $F\colon\R^*\to\R\cup\{\varepsilon\}$ defined by $F_n(\bfx)=f(\sum_{i=1}^nx_i)$ for every integer $n\geqslant 1$, where $f\colon\R\to\R$ is a one-to-one function, is an example of preassociative function.

It is immediate to see that any associative $\varepsilon$-standard operation $F\colon X^*\to X\cup\{\varepsilon\}$ necessarily satisfies the equation $F_1\circ F^{\flat}=F^{\flat}$ (take $\bfx=\bfz=\varepsilon$ in Eq.~(\ref{eq;assoc})) and it can be shown (Proposition~\ref{prop:A-PA1}) that an $\varepsilon$-standard operation $F\colon X^*\to X\cup\{\varepsilon\}$ is associative if and only if it is preassociative and satisfies $F_1\circ F^{\flat}=F^{\flat}$.

It is noteworthy that, contrary to associativity, preassociativity does not involve any composition of functions and hence allows us to consider a codomain $Y$ that may differ from $X\cup\{\varepsilon\}$. For instance, the length function $F\colon X^*\to\R$, defined by $F(\bfx)=|\bfx|$, is standard and preassociative.

In this paper we mainly consider preassociative standard functions $F\colon X^*\to Y$ for which $F_1$ and $F^{\flat}$ have the same range. (For $\varepsilon$-standard operations, the latter condition is an immediate consequence of the condition $F_1\circ F^{\flat}=F^{\flat}$ and hence these preassociative functions include all the associative $\varepsilon$-standard operations.) In Section 3 we recall the characterization of these functions as compositions of the form $F^{\flat}=f\circ H^{\flat}$, where $H\colon X^*\to X\cup\{\varepsilon\}$ is an associative $\varepsilon$-standard operation and $f\colon H(X^*\setminus\{\varepsilon\})\to Y$ is one-to-one.

In Section 4 we investigate the special case of standard functions whose unary parts are one-to-one. It turns out that this latter condition greatly simplifies the general results on associative and preassociative standard functions obtained in \cite{MarTeh,MarTeh2}. Section 5 contains the main results of this paper. We first recall axiomatizations of some noteworthy classes of associative $\varepsilon$-standard operations, such as the class of variadic extensions of Acz\'elian semigroups, the class of variadic extensions of t-norms and t-conorms, and the class of associative and range-idempotent $\varepsilon$-standard operations. Then we show how these axiomatizations can be extended to classes of preassociative standard functions. Finally, we address some open questions in Section 6.

Throughout the paper we make use of the following notation and terminology. We denote by $\N$ the set $\{1,2,3,\ldots\}$ of strictly positive integers. The domain and range of any function $f$ are denoted by $\dom(f)$ and $\ran(f)$, respectively. The identity operation on $X$ is the function $\id\colon X\to X$ defined by $\id(x) = x$.

\section{Preliminaries}

Recall that a function $F\colon X^n\to X$ ($n\in\N$) is said to be \emph{idempotent} (see, e.g., \cite{GraMarMesPap09}) if $F(x,\ldots,x)=x$ for every $x\in X$. Also, an $\varepsilon$-standard operation $F\colon X^*\to X\cup\{\varepsilon\}$ is said to be
\begin{itemize}
\item \emph{idempotent} if $F_n$ is idempotent for every $n\in\N$,

\item \emph{unarily idempotent} \cite{MarTeh,MarTeh2} if $F_1=\id$,

\item \emph{unarily range-idempotent} \cite{MarTeh,MarTeh2} if $F_1|_{\ran(F^{\flat})}=\id|_{\ran(F^{\flat})}$, or equivalently, $F_1\circ F^{\flat}=F^{\flat}$. In this case $F_1$ necessarily satisfies the equation $F_1\circ F_1=F_1$.
\end{itemize}

A function $F\colon X^*\to Y$ is said to be \emph{unarily quasi-range-idempotent} \cite{MarTeh,MarTeh2} if $\ran(F_1)=\ran(F^{\flat})$. Since this property is a consequence of the condition $F_1\circ F^{\flat}=F^{\flat}$ whenever $F$ is an $\varepsilon$-standard operation, we see that if an $\varepsilon$-standard operation $F\colon X^*\to X\cup\{\varepsilon\}$ is unarily range-idempotent, then it is necessarily unarily quasi-range-idempotent. The following proposition, stated in \cite{MarTeh} without proof, provides a finer result.

\begin{proposition}[{\cite{MarTeh,MarTeh2}}]\label{prop:22f1f1f1}
An $\varepsilon$-standard operation $F\colon X^*\to X\cup\{\varepsilon\}$ is unarily range-idempotent if and only if it is unarily quasi-range-idempotent and satisfies $F_1\circ F_1=F_1$.
\end{proposition}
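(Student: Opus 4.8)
The plan is to prove the two implications separately. The forward implication essentially recapitulates observations already recorded above (the remark that unary range-idempotence entails both unary quasi-range-idempotence and $F_1\circ F_1=F_1$), so I would give only the short clean argument for it; the backward implication is the substantive direction. Throughout I would use that $\varepsilon$-standardness forces $\ran(F^{\flat})\subseteq X$, so that the composition $F_1\circ F^{\flat}$ is well defined and the identity $F_1|_{\ran(F^{\flat})}=\id|_{\ran(F^{\flat})}$ is an equivalent phrasing of unary range-idempotence.

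For the forward implication I would assume $F_1\circ F^{\flat}=F^{\flat}$, i.e. that $F_1$ fixes every point of $\ran(F^{\flat})$. To get unary quasi-range-idempotence, the inclusion $\ran(F_1)\subseteq\ran(F^{\flat})$ is automatic since $F_1=F|_{X^1}$ and $X^1\subseteq X^*\setminus\{\varepsilon\}$; for the reverse inclusion, any $y\in\ran(F^{\flat})$ satisfies $y=F_1(y)\in\ran(F_1)$, whence $\ran(F_1)=\ran(F^{\flat})$. To get $F_1\circ F_1=F_1$, I would note that for every $x\in X$ the value $F_1(x)$ lies in $\ran(F_1)\subseteq\ran(F^{\flat})$ and is therefore fixed by $F_1$, so $F_1(F_1(x))=F_1(x)$.

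For the backward implication I would assume $\ran(F_1)=\ran(F^{\flat})$ together with $F_1\circ F_1=F_1$, and aim to show that $F_1$ fixes $\ran(F^{\flat})$. Fix $y\in\ran(F^{\flat})$. The key step is to exploit quasi-range-idempotence to re-express $y$ as a value of the unary part: since $\ran(F^{\flat})=\ran(F_1)$, there is some $x\in X$ with $y=F_1(x)$. Idempotence then finishes the argument, since $F_1(y)=F_1(F_1(x))=F_1(x)=y$. As $y$ was arbitrary, this yields $F_1|_{\ran(F^{\flat})}=\id|_{\ran(F^{\flat})}$, that is, unary range-idempotence.

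The only delicate point, and hence the step I expect to be the crux, is precisely this re-expression of an arbitrary element of $\ran(F^{\flat})$ as $F_1(x)$ in the backward direction: a value produced by a tuple of large length need not a priori belong to the image of the unary part, and it is exactly the hypothesis $\ran(F_1)=\ran(F^{\flat})$ that supplies such a preimage and thereby lets the weaker idempotence condition $F_1\circ F_1=F_1$ do its work. Everything else reduces to the elementary inclusion $\ran(F_1)\subseteq\ran(F^{\flat})$ and the bookkeeping ensuring that all compositions remain within $X$.
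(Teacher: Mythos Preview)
Your proof is correct and follows essentially the same approach as the paper's: both directions hinge on the trivial inclusion $\ran(F_1)\subseteq\ran(F^{\flat})$ together with the observation that, once $\ran(F_1)=\ran(F^{\flat})$, the statements ``$F_1$ fixes $\ran(F_1)$'' and ``$F_1$ fixes $\ran(F^{\flat})$'' are the same. The paper compresses the sufficiency step into a single sentence, while you spell out the preimage argument explicitly, but the underlying reasoning is identical.
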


\begin{proof}
(Necessity) We have $\ran(F_1)\subseteq\ran(F^{\flat})$ for any operation $F\colon X^*\to X\cup\{\varepsilon\}$. Since $F$ is unarily range-idempotent, we have $F_1\circ F^{\flat}=F^{\flat}$, from which the converse inclusion follows immediately. In particular, $F_1\circ F_1=F_1$.

(Sufficiency) Since $F$ is unarily quasi-range-idempotent, the identity $F_1\circ F_1=F_1$ is equivalent to $F_1\circ F^{\flat}=F^{\flat}$.
\end{proof}

Recall that a function $g$ is a \emph{quasi-inverse} \cite[Sect.~2.1]{SchSkl83} of a function $f$ if
\begin{eqnarray}
&& f\circ g|_{\ran(f)}=\id|_{\ran(f)},\label{eq:defqi1}\\
&& \ran(g|_{\ran(f)})=\ran(g).\label{eq:defqi2}
\end{eqnarray}

For any function $f$, denote by $Q(f)$ the set of its quasi-inverses. This set is nonempty whenever we assume the Axiom of Choice (AC), which is actually just another
form of the statement ``every function has a quasi-inverse.'' Recall also that the relation of being quasi-inverse is symmetric: if $g \in Q(f)$ then $f \in Q(g)$; moreover, we have $\ran(g)\subseteq\dom(f)$ and $\ran(f)\subseteq\dom(g)$ and the functions $f|_{\ran(g)}$ and $g|_{\ran(f)}$ are one-to-one.

By definition, if $g\in Q(f)$, then $g|_{\ran(f)}\in Q(f)$. Thus we can always restrict the domain of any quasi-inverse $g\in Q(f)$ to $\ran(f)$.
These ``restricted'' quasi-inverses, also called \emph{right-inverses} \cite[p.~25]{AlsFraSch06}, are then simply characterized by condition (\ref{eq:defqi1}), which can
be rewritten as
$$
g(y)\in f^{-1}\{y\},\qquad y\in\ran(f).
$$

The following proposition yields necessary and sufficient conditions for a function $F\colon X^*\to Y$ to be unarily quasi-range-idempotent.

\begin{proposition}[{\cite{MarTeh,MarTeh2}}]\label{prop:ACqriTFAE3451}
Assume AC and let $F\colon X^*\to Y$ be a function. The following assertions are equivalent.
\begin{enumerate}
\item[(i)] $F$ is unarily quasi-range-idempotent.

\item[(ii)] There exists an $\varepsilon$-standard operation $H\colon X^*\to X\cup\{\varepsilon\}$ such that $F^{\flat}=F_1\circ H^{\flat}$.

\item[(iii)] There exists a unarily idempotent $\varepsilon$-standard operation $H\colon X^*\to X\cup\{\varepsilon\}$ and a function $f\colon X\to Y$ such that $F^{\flat}=f\circ H^{\flat}$. In this case, $f=F_1$.
\end{enumerate}
In assertions (ii) we may choose $H^{\flat}=g\circ F^{\flat}$ for any $g\in Q(F_1)$ and $H$ is then unarily range-idempotent. In assertion (iii) we may choose $H_1=\id$ and $H_n=g\circ F_n$ for every $n>1$ and any $g\in Q(F_1)$.
\end{proposition}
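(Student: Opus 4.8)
The plan is to establish the full equivalence by proving the two nontrivial constructions (i)$\Rightarrow$(ii) and (i)$\Rightarrow$(iii) directly, together with the short reverse implications (ii)$\Rightarrow$(i) and (iii)$\Rightarrow$(ii). Since (iii)$\Rightarrow$(ii)$\Rightarrow$(i)$\Rightarrow$(iii) closes a cycle, this yields the equivalence; proving (i)$\Rightarrow$(ii) separately (rather than only through (iii)) is what produces the specific operation $H^{\flat}=g\circ F^{\flat}$ announced for assertion (ii), while (i)$\Rightarrow$(iii) produces the other announced operation. The single recurring tool will be the right-inverse identity $F_1\circ g|_{\ran(F_1)}=\id|_{\ran(F_1)}$ enjoyed by any $g\in Q(F_1)$, which is nonempty by AC.

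For (i)$\Rightarrow$(ii) I would fix $g\in Q(F_1)$ and set $H(\varepsilon)=\varepsilon$ and $H^{\flat}=g\circ F^{\flat}$. The key computation is $F_1\circ H^{\flat}=F_1\circ g\circ F^{\flat}$; here condition (i), namely $\ran(F^{\flat})=\ran(F_1)$, guarantees that $F^{\flat}(\bfx)\in\ran(F_1)$ for every $\bfx\neq\varepsilon$, so the right-inverse identity applies argumentwise and gives $F_1\circ g\circ F^{\flat}=F^{\flat}$, as required. Applying the same identity twice yields $H_1\circ H^{\flat}=g\circ F_1\circ g\circ F^{\flat}=g\circ F^{\flat}=H^{\flat}$, so this $H$ is unarily range-idempotent. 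The construction for (i)$\Rightarrow$(iii) is analogous but instead sets $H_1=\id$, $H_n=g\circ F_n$ for $n>1$, and $f=F_1$; restriction to unary tuples gives $f\circ H_1=F_1$, while for $n>1$ the right-inverse identity (again using (i) to place $F_n(\bfx)$ inside $\ran(F_1)$) gives $f\circ H_n=F_1\circ g\circ F_n=F_n$, so that $F^{\flat}=f\circ H^{\flat}$.

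The converses are immediate. For (ii)$\Rightarrow$(i) it suffices to chain the inclusions $\ran(F_1)\subseteq\ran(F^{\flat})=\ran(F_1\circ H^{\flat})\subseteq\ran(F_1)$, the first being automatic since unary tuples have positive length. For (iii)$\Rightarrow$(ii) I would restrict $F^{\flat}=f\circ H^{\flat}$ to unary tuples and use $H_1=\id$ to obtain $f=f\circ H_1=F_1$, whence $F^{\flat}=F_1\circ H^{\flat}$; this simultaneously proves the claimed identity $f=F_1$.

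The main obstacle, modest as it is, lies in verifying that the operations $H$ built above are genuinely $\varepsilon$-standard rather than merely well defined. The default-value requirement $H(\varepsilon)=\varepsilon$ holds by construction, but standardness demands that $H^{\flat}$ never take the value $\varepsilon$; this is where I would invoke the symmetry of the quasi-inverse relation, which yields $\ran(g)\subseteq\dom(F_1)=X$, so that every value $g(F_n(\bfx))$ lies in $X$ and hence differs from $\varepsilon$. Past this point the argument is routine bookkeeping of which range each intermediate value occupies, the one conceptual ingredient throughout being that hypothesis (i) is exactly what forces the arguments fed to $g$ to lie in $\ran(F_1)$, the set on which $g$ acts as a true right inverse of $F_1$.
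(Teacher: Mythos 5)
Your proof is correct. Note that the paper states this proposition without proof (it is imported from \cite{MarTeh,MarTeh2}), but the constructions you use --- $H^{\flat}=g\circ F^{\flat}$ for (ii) and $H_1=\id$, $H_n=g\circ F_n$ for (iii), with the right-inverse identity $F_1\circ g|_{\ran(F_1)}=\id|_{\ran(F_1)}$ applied via $\ran(F^{\flat})=\ran(F_1)$ --- are exactly the ones announced in the statement itself, so your argument is the intended one; the attention you pay to well-definedness and $\varepsilon$-standardness of $H$ is the right place to be careful, and you handle it correctly.
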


We say that a function $F\colon X^*\to Y$ is \emph{unarily idempotizable} if it is unarily quasi-range-idempotent and $F_1$ is one-to-one. In this case the composition $F_1^{-1}\circ F^{\flat}$ from $X^*\setminus\{\varepsilon\}$ to $X$ is unarily idempotent. From Proposition~\ref{prop:ACqriTFAE3451}, we immediately derive the following corollary.

\begin{corollary}
Let $F\colon X^*\to Y$ be a function. The following assertions are equivalent.
\begin{enumerate}
\item[(i)] $F$ is unarily idempotizable.

\item[(ii)] $F_1$ is a bijection from $X$ onto $\ran(F^{\flat})$ and there is a unique unarily idempotent $\varepsilon$-standard operation $H\colon X^*\to X\cup\{\varepsilon\}$, namely $H^{\flat}=F_1^{-1}\circ F^{\flat}$, such that $F^{\flat}=F_1\circ H^{\flat}$.

\item[(iii)] There exist a unarily idempotent $\varepsilon$-standard operation $H\colon X^*\to X\cup\{\varepsilon\}$ and a bijection $f$ from $X$ onto $\ran(F^{\flat})$ such that $F^{\flat}=f\circ H^{\flat}$. In this case we have $f=F_1$ and $H^{\flat}=F_1^{-1}\circ F^{\flat}$.
\end{enumerate}
\end{corollary}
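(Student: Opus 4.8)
The plan is to read the statement as the specialization of Proposition~\ref{prop:ACqriTFAE3451} to the case where $F_1$ is one-to-one, and to prove the cycle (i) $\Rightarrow$ (ii) $\Rightarrow$ (iii) $\Rightarrow$ (i). The single observation that drives everything is that injectivity of $F_1$ pins down the quasi-inverse: any $g\in Q(F_1)$ satisfies $g(y)\in F_1^{-1}\{y\}$ for $y\in\ran(F_1)$, and when $F_1$ is one-to-one this preimage is a singleton, so $g$ must agree on $\ran(F_1)$ with the genuine inverse of the corestricted bijection $F_1\colon X\to\ran(F_1)$. Hence the factorization $H$ produced by Proposition~\ref{prop:ACqriTFAE3451} becomes uniquely determined, and, since $F_1^{-1}$ is an honest inverse rather than a chosen quasi-inverse, the Axiom of Choice is no longer needed.

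First I would prove (i) $\Rightarrow$ (ii). By definition, unary idempotizability says that $F$ is unarily quasi-range-idempotent, i.e.\ $\ran(F_1)=\ran(F^{\flat})$, and that $F_1$ is one-to-one; together these assert exactly that $F_1$ is a bijection from $X$ onto $\ran(F^{\flat})$. I would then set $H(\varepsilon)=\varepsilon$ and $H^{\flat}=F_1^{-1}\circ F^{\flat}$ and verify the required properties: $H^{\flat}$ takes values in $X$ on nonempty tuples, so $H$ is $\varepsilon$-standard; $H_1=F_1^{-1}\circ F_1=\id$, so $H$ is unarily idempotent; and $F_1\circ H^{\flat}=F_1\circ F_1^{-1}\circ F^{\flat}=F^{\flat}$ because $\ran(F^{\flat})=\ran(F_1)$ is the domain of $F_1^{-1}$. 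For uniqueness, given any unarily idempotent $\varepsilon$-standard $H'$ with $F^{\flat}=F_1\circ (H')^{\flat}$, left-composition with $F_1^{-1}$ (legitimate since $\ran(F^{\flat})\subseteq\ran(F_1)$) forces $(H')^{\flat}=F_1^{-1}\circ F^{\flat}=H^{\flat}$, whence $H'=H$.

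The implication (ii) $\Rightarrow$ (iii) is immediate upon taking $f=F_1$. For (iii) $\Rightarrow$ (i), I would evaluate $F^{\flat}=f\circ H^{\flat}$ on unary tuples: since $H$ is unarily idempotent we have $H_1=\id$, whence $F_1=f$. Thus $F_1$ coincides with the bijection $f$ from $X$ onto $\ran(F^{\flat})$, so $F_1$ is one-to-one with $\ran(F_1)=\ran(F^{\flat})$, that is, $F$ is unarily idempotizable; left-cancelling $F_1$ in $F^{\flat}=F_1\circ H^{\flat}$ then yields the asserted form $H^{\flat}=F_1^{-1}\circ F^{\flat}$.

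The only point requiring any care, and hence the main (mild) obstacle, is the uniqueness assertion in (ii) together with the checks that the candidate $H$ is genuinely $\varepsilon$-standard and unarily idempotent. All of these reduce to the single fact highlighted at the outset, namely that injectivity of $F_1$ turns the quasi-inverse into the two-sided partial inverse $F_1^{-1}$, so that every composition with it is forced rather than merely available.
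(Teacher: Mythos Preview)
Your proposal is correct and takes essentially the same approach as the paper, which simply states that the corollary is immediately derived from Proposition~\ref{prop:ACqriTFAE3451}. You have supplied exactly the specialization the paper has in mind: injectivity of $F_1$ collapses the quasi-inverse to the genuine inverse $F_1^{-1}$, forcing uniqueness of $H$ and removing the need for AC; your cycle (i) $\Rightarrow$ (ii) $\Rightarrow$ (iii) $\Rightarrow$ (i) is a faithful unpacking of this.
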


\section{Associative and preassociative functions}

In this section we recall some results on associative and preassociative variadic functions.

As the following proposition \cite{CouMar11,LehMarTeh} states, under the assumption that $F(\varepsilon)=\varepsilon$ there are different equivalent definitions of associativity (see also \cite[p.~32]{GraMarMesPap09}).

\begin{proposition}[{\cite{CouMar11,LehMarTeh}}]\label{prop:assocAltForms}
Let $F\colon X^*\to X\cup\{\varepsilon\}$ be an operation such that $F(\varepsilon)=\varepsilon$. The following assertions are equivalent:
\begin{enumerate}
\item[(i)] $F$ is associative.

\item[(ii)] For every $\bfx,\bfy,\bfz,\bfx',\bfy',\bfz'\in X^*$ such that $(\bfx,\bfy,\bfz)=(\bfx',\bfy',\bfz')$ we have $F(\bfx,F(\bfy),\bfz)=F(\bfx',F(\bfy'),\bfz')$.

\item[(iii)] For every $\bfx,\bfy\in X^*$ we have $F(\bfx,\bfy)=F(F(\bfx),F(\bfy))$.
\end{enumerate}
\end{proposition}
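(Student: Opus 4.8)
The plan is to establish the cycle of implications (i) $\Rightarrow$ (ii) $\Rightarrow$ (iii) $\Rightarrow$ (i). The first implication is essentially immediate: if $F$ is associative, then $F(\bfx,F(\bfy),\bfz)=F(\bfx,\bfy,\bfz)$ and $F(\bfx',F(\bfy'),\bfz')=F(\bfx',\bfy',\bfz')$ for any decompositions, so the hypothesis $(\bfx,\bfy,\bfz)=(\bfx',\bfy',\bfz')$ forces both sides of (ii) to equal the common value of $F$ on the concatenated tuple. The substance of the proposition lies in the two remaining implications, and in both the hypothesis $F(\varepsilon)=\varepsilon$ will be used repeatedly.

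For (ii) $\Rightarrow$ (iii) the key idea is that (ii) allows us to insert or delete a collapsed block $F(\varepsilon)=\varepsilon$ at a chosen position of a tuple without changing the value of $F$. Concretely, I would prove $F(\bfx,\bfy)=F(F(\bfx),F(\bfy))$ in two steps. First, applying (ii) to the fixed tuple $(\bfx,\bfy)$ with the block decompositions $(\bfx_1,\bfy_1,\bfz_1)=(\varepsilon,\bfx,\bfy)$ and $(\bfx_2,\bfy_2,\bfz_2)=(\varepsilon,\varepsilon,(\bfx,\bfy))$, and using $F(\varepsilon)=\varepsilon$, yields $F(F(\bfx),\bfy)=F(\bfx,\bfy)$. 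Second, applying (ii) to the fixed tuple $(F(\bfx),\bfy)$ with the decompositions $(F(\bfx),\bfy,\varepsilon)$ and $(F(\bfx),\varepsilon,\bfy)$ gives $F(F(\bfx),F(\bfy))=F(F(\bfx),\bfy)$. Chaining these two equalities produces (iii). The delicate point is choosing the two decompositions so that their images under (ii) differ by exactly one insertion or removal of a collapsed empty block.

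For (iii) $\Rightarrow$ (i), I would first extract two consequences of (iii). Taking $\bfv=\varepsilon$ in $F(\bfu,\bfv)=F(F(\bfu),F(\bfv))$ and using $F(\varepsilon)=\varepsilon$ gives the idempotence identity $F(F(\bfu))=F(\bfu)$. Combining this with (iii) then gives the absorption identity $F(\bfx,F(\bfy))=F(\bfx,\bfy)$, since both $F(\bfx,F(\bfy))$ and $F(\bfx,\bfy)$ reduce, via (iii) and idempotence, to $F(F(\bfx),F(\bfy))$. With these in hand, associativity follows by a short computation: applying (iii) to split off the last block $\bfz$, then the absorption identity, then (iii) again,
$$
F(\bfx,F(\bfy),\bfz)=F(F(\bfx,F(\bfy)),F(\bfz))=F(F(\bfx,\bfy),F(\bfz))=F(\bfx,\bfy,\bfz).
$$
The case $F(\bfy)=\varepsilon$ requires no separate treatment, being absorbed automatically by the convention $F(\varepsilon,\bfw)=F(\bfw,\varepsilon)=F(\bfw)$.

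I expect the main obstacle to be the implication (ii) $\Rightarrow$ (iii): unlike the other two steps, which are driven directly by the explicit functional equations, here one must discover the right pair of three-block decompositions of a fixed tuple that isolates a single application of $F$ to the empty tuple. Once the role of $F(\varepsilon)=\varepsilon$ as a freely insertable marker is recognized, the remaining manipulations reduce to routine bookkeeping with the concatenation convention.
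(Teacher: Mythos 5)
The paper states this proposition without proof, deferring to \cite{CouMar11,LehMarTeh}, so there is no in-paper argument to compare against; judged on its own, your proof is correct and complete. The cycle (i) $\Rightarrow$ (ii) $\Rightarrow$ (iii) $\Rightarrow$ (i) works as written: the two decompositions you choose in (ii) $\Rightarrow$ (iii), namely $(\varepsilon,\bfx,\bfy)$ versus $(\varepsilon,\varepsilon,(\bfx,\bfy))$ and then $(F(\bfx),\bfy,\varepsilon)$ versus $(F(\bfx),\varepsilon,\bfy)$, do concatenate to the same tuples and yield exactly $F(F(\bfx),\bfy)=F(\bfx,\bfy)$ and $F(F(\bfx),F(\bfy))=F(F(\bfx),\bfy)$; and in (iii) $\Rightarrow$ (i) the idempotence identity $F(F(\bfu))=F(\bfu)$ and the absorption identity are derived correctly, with the degenerate case $F(\bfy)=\varepsilon$ absorbed by the convention $F(\bfw,\varepsilon)=F(\varepsilon,\bfw)=F(\bfw)$ exactly as you note.
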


\begin{remark}
Associativity for $\varepsilon$-standard operations was defined in \cite{CouMar11} as in assertion (ii) of Proposition~\ref{prop:assocAltForms}. It was also defined in \cite[p.~16]{CalKolKomMes02}, \cite[p.~32]{GraMarMesPap09}, and \cite[p.~216]{KleMesPap20} as in assertion (iii) of Proposition~\ref{prop:assocAltForms}.
\end{remark}

Just as for associativity, preassociativity (see Definition~\ref{de:7ads5sa}) may have different equivalent forms. The following proposition, stated in \cite{MarTeh} without proof, gives an equivalent definition based on two equalities of values.

\begin{proposition}[{\cite{MarTeh,MarTeh2}}]\label{prop:4.1-78f6dg}
A function $F\colon X^*\to Y$ is preassociative if and only if for every $\bfx,\bfx',\bfy,\bfy'\in X^*$ we have
$$
F(\bfx) ~=~ F(\bfx')\quad\mbox{and}\quad F(\bfy) ~=~ F(\bfy')\quad\Rightarrow\quad F(\bfx,\bfy) ~=~ F(\bfx',\bfy').
$$
\end{proposition}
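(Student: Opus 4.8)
The plan is to prove both implications separately. The statement asserts that preassociativity (Definition~\ref{de:7ads5sa}) is equivalent to a seemingly weaker condition in which we are allowed to replace \emph{two} blocks simultaneously, but only when those two blocks are adjacent and sit at the two ends (there is no surrounding $\bfx$ on the left of the first block nor $\bfz$ on the right of the second). So the two conditions differ in two ways: the number of blocks replaced at once, and the presence of flanking context tuples.

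For the forward direction, suppose $F$ is preassociative and assume $F(\bfx)=F(\bfx')$ and $F(\bfy)=F(\bfy')$. First I would use preassociativity with the flanking tuples chosen as $(\varepsilon,\bfx,\bfy)$, i.e. replace the first block: since $F(\bfx)=F(\bfx')$, taking the surrounding context to be empty on the left and $\bfy$ on the right yields
$$
F(\bfx,\bfy) ~=~ F(\bfx',\bfy).
$$
Then I would apply preassociativity a second time to replace the second block, now with $\bfx'$ as the left context and empty right context: since $F(\bfy)=F(\bfy')$, we obtain
$$
F(\bfx',\bfy) ~=~ F(\bfx',\bfy').
$$
Chaining these two equalities gives $F(\bfx,\bfy)=F(\bfx',\bfy')$, which is exactly the desired conclusion. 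This direction is essentially immediate; it is just two successive single-block substitutions.

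For the converse, assume the two-equalities condition and suppose $F(\bfy)=F(\bfy')$; I must establish $F(\bfx,\bfy,\bfz)=F(\bfx,\bfy',\bfz)$ for arbitrary $\bfx,\bfz$. The natural idea is to apply the hypothesis with the first block taken to be $(\bfx,\bfy)$ versus $(\bfx,\bfy')$ and the second block taken to be $\bfz$ versus itself. But to invoke the hypothesis I need the premise $F(\bfx,\bfy)=F(\bfx,\bfy')$, which is precisely the $\bfz=\varepsilon$ instance of what I am trying to prove. The main obstacle is therefore bootstrapping: I must first derive the no-right-context case $F(\bfx,\bfy)=F(\bfx,\bfy')$ on its own, and only afterward attach the right context $\bfz$.

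To obtain the no-right-context case, I would apply the hypothesis with first block $\bfx$ (against itself, so trivially $F(\bfx)=F(\bfx)$) and second block $\bfy$ against $\bfy'$ (using $F(\bfy)=F(\bfy')$), yielding $F(\bfx,\bfy)=F(\bfx,\bfy')$. Now with this equality in hand as the first premise, I apply the hypothesis once more, this time with first block $(\bfx,\bfy)$ versus $(\bfx,\bfy')$ and second block $\bfz$ against itself (trivially $F(\bfz)=F(\bfz)$), to conclude
$$
F(\bfx,\bfy,\bfz) ~=~ F(\bfx,\bfy',\bfz),
$$
which is exactly preassociativity. The only subtlety worth checking is the edge behavior when some of the tuples equal $\varepsilon$, but the convention $F(\varepsilon,\bfu)=F(\bfu,\varepsilon)=F(\bfu)$ makes all the degenerate cases consistent, so the argument goes through uniformly.
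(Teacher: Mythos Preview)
Your proof is correct and follows essentially the same approach as the paper's: for necessity you perform two successive single-block substitutions to pass from $F(\bfx,\bfy)$ through $F(\bfx',\bfy)$ to $F(\bfx',\bfy')$, and for sufficiency you first apply the hypothesis with the trivial pair $(\bfx,\bfx)$ to obtain $F(\bfx,\bfy)=F(\bfx,\bfy')$, then use this equality together with the trivial pair $(\bfz,\bfz)$ to attach the right context. The paper presents the same two-step chains, just more tersely.
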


\begin{proof}
(Necessity) Let $\bfx,\bfy,\bfx',\bfy'\in X^*$. If $F(\bfx)=F(\bfx')$ and $F(\bfy)=F(\bfy')$, then we have $F(\bfx,\bfy)=F(\bfx',\bfy)=F(\bfx',\bfy')$.

(Sufficiency) Let $\bfx,\bfy,\bfy',\bfz\in X^*$. If $F(\bfy)=F(\bfy')$, then $F(\bfx,\bfy)=F(\bfx,\bfy')$ and finally $F(\bfx,\bfy,\bfz)=F(\bfx,\bfy',\bfz)$.
\end{proof}

As mentioned in the introduction, preassociativity generalizes associativity. Moreover, we have the following result.

\begin{proposition}[{\cite{MarTeh,MarTeh2}}]\label{prop:A-PA1}
An $\varepsilon$-standard operation $F\colon X^*\to X\cup\{\varepsilon\}$ is associative if and only if it is preassociative and unarily range-idempotent (i.e., $F_1\circ F^{\flat}=F^{\flat}$).
\end{proposition}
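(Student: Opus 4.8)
The plan is to treat the two implications separately, with the forward direction being essentially immediate and the converse requiring a translation of unary range-idempotence into an equality of $F$-values that can be fed into the preassociativity hypothesis.

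For the necessity part, I would first recover unary range-idempotence by setting $\bfx=\bfz=\varepsilon$ in the associativity equation~(\ref{eq;assoc}), which yields $F(\bfy)=F(F(\bfy))$ for every $\bfy$; reading the right-hand side as $F_1\circ F^{\flat}$ on nonempty tuples gives $F_1\circ F^{\flat}=F^{\flat}$. Preassociativity then follows directly: if $F(\bfy)=F(\bfy')$, applying associativity twice gives $F(\bfx,\bfy,\bfz)=F(\bfx,F(\bfy),\bfz)=F(\bfx,F(\bfy'),\bfz)=F(\bfx,\bfy',\bfz)$.

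For the sufficiency part, I would verify the associativity equation $F(\bfx,\bfy,\bfz)=F(\bfx,F(\bfy),\bfz)$ for all $\bfx,\bfy,\bfz$. The case $\bfy=\varepsilon$ is handled by the convention $F(\bfx,\varepsilon,\bfz)=F(\bfx,\bfz)$ together with $F(\varepsilon)=\varepsilon$, so both sides coincide. For $\bfy\neq\varepsilon$, standardness together with $F(\varepsilon)=\varepsilon$ forces $F(\bfy)\in X$, so that $F(\bfy)$ is a genuine single element and the $1$-tuple $(F(\bfy))$ is meaningful. Unary range-idempotence, written as $F_1(F(\bfy))=F(\bfy)$, then says exactly that $F(\bfy)=F\big((F(\bfy))\big)$, i.e.\ the tuple $\bfy$ and the $1$-tuple $(F(\bfy))$ have the same value under $F$. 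Applying preassociativity with $\bfy'=(F(\bfy))$ immediately yields $F(\bfx,\bfy,\bfz)=F(\bfx,(F(\bfy)),\bfz)=F(\bfx,F(\bfy),\bfz)$, as required.

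I expect the only delicate point to be the bookkeeping around $\varepsilon$: one must use the $\varepsilon$-standard hypothesis to guarantee that $F(\bfy)$ lands in $X$ rather than collapsing to $\varepsilon$ (so that substituting the $1$-tuple $(F(\bfy))$ is legitimate), and one must dispatch the $\bfy=\varepsilon$ case by hand. Once these conventions are in place, the equivalence reduces to the single observation that unary range-idempotence is precisely the statement that each nonempty block $\bfy$ is interchangeable, as far as $F$ is concerned, with the singleton $(F(\bfy))$, which is exactly what preassociativity is designed to exploit.
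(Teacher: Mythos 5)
Your argument is correct and complete: the paper itself states this proposition without proof (deferring to the cited references), but your two-directional argument is exactly the intended one, and the necessity half even coincides with the observation the paper already makes in the introduction (taking $\bfx=\bfz=\varepsilon$ in Eq.~(\ref{eq;assoc}) to get $F_1\circ F^{\flat}=F^{\flat}$). Your handling of the $\varepsilon$ bookkeeping --- using $\varepsilon$-standardness to ensure $F(\bfy)\in X$ for $\bfy\neq\varepsilon$ so that the $1$-tuple $(F(\bfy))$ is a legitimate substitute for $\bfy$, and dispatching $\bfy=\varepsilon$ by the convention $F(\bfx,\varepsilon,\bfz)=F(\bfx,\bfz)$ --- is precisely the delicate point, and you have treated it correctly.
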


The following two straightforward propositions show how new preassociative functions can be generated from given preassociative functions by compositions with unary maps.

\begin{proposition}[Right composition]
If $F\colon X^*\to Y$ is standard and preassociative then, for every function $g\colon X'\to X$, the function $H\colon X'^*\to Y$, defined by $H_0=\mathbf{a}$ for some $\mathbf{a}\in Y\setminus\ran(F^{\flat})$ and $H_n=F_n\circ(g,\ldots,g)$ for every $n\in\N$, is standard and preassociative.
\end{proposition}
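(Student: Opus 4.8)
The plan is to treat standardness and preassociativity separately, after fixing notation that lets us push $g$ through tuples. For $\bfx=(x_1,\ldots,x_n)\in X'^*$ I write $g(\bfx)=(g(x_1),\ldots,g(x_n))\in X^*$, so that $g(\varepsilon)=\varepsilon$, $|g(\bfx)|=|\bfx|$, and $g(\bfx,\bfy)=(g(\bfx),g(\bfy))$; concatenation is respected since $g$ acts coordinatewise. With this notation the definition of $H$ reads $H(\varepsilon)=\mathbf{a}$ and $H(\bfx)=F^{\flat}(g(\bfx))$ for every nonempty $\bfx$, so in particular $\ran(H^{\flat})\subseteq\ran(F^{\flat})$.

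For standardness, I would suppose $H(\bfx)=H(\varepsilon)=\mathbf{a}$. If $\bfx\neq\varepsilon$, then $H(\bfx)=F^{\flat}(g(\bfx))\in\ran(F^{\flat})$, contradicting the hypothesis $\mathbf{a}\in Y\setminus\ran(F^{\flat})$. Hence $\bfx=\varepsilon$, so $H$ is standard. This same hypothesis on $\mathbf{a}$ is precisely what rules out the degenerate cases in the preassociativity argument below.

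For preassociativity, fix $\bfx,\bfy,\bfy',\bfz\in X'^*$ with $H(\bfy)=H(\bfy')$, and aim at $H(\bfx,\bfy,\bfz)=H(\bfx,\bfy',\bfz)$. First I would note that, by the standardness just proved, $\bfy=\varepsilon$ forces $H(\bfy')=H(\varepsilon)=\mathbf{a}$ and hence $\bfy'=\varepsilon$; thus either $\bfy=\bfy'=\varepsilon$ or both $\bfy,\bfy'$ are nonempty. In the first case $(\bfx,\bfy,\bfz)=(\bfx,\bfz)=(\bfx,\bfy',\bfz)$ by the convention on concatenation with $\varepsilon$, and the equality is immediate. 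In the second case $H(\bfy)=H(\bfy')$ becomes $F(g(\bfy))=F(g(\bfy'))$; applying the preassociativity of $F$ (Definition~\ref{de:7ads5sa}) to the tuples $g(\bfx),g(\bfy),g(\bfy'),g(\bfz)\in X^*$ yields $F(g(\bfx),g(\bfy),g(\bfz))=F(g(\bfx),g(\bfy'),g(\bfz))$. Since $\bfy,\bfy'$ are nonempty, both $(\bfx,\bfy,\bfz)$ and $(\bfx,\bfy',\bfz)$ are nonempty, so these two values are exactly $H(\bfx,\bfy,\bfz)$ and $H(\bfx,\bfy',\bfz)$, using $g(\bfx,\bfy,\bfz)=(g(\bfx),g(\bfy),g(\bfz))$ and likewise with $\bfy'$. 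This delivers the desired equality.

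The argument is short, and the only genuine care needed lies in the bookkeeping of the nullary value: because $H(\varepsilon)=\mathbf{a}$ rather than $F(\varepsilon)$, one cannot simply write $H=F\circ g$ uniformly on all of $X'^*$, so the separation of the $\varepsilon$ case from the nonempty case is essential. The condition $\mathbf{a}\notin\ran(F^{\flat})$ does double duty here, securing standardness and simultaneously forbidding the mixed situation in which exactly one of $\bfy,\bfy'$ is empty.
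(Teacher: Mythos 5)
Your proof is correct; the paper gives no proof of this proposition, dismissing it as straightforward, and your argument is precisely the intended routine verification. The one point of genuine care --- separating the case $\bfy=\bfy'=\varepsilon$ from the case where both are nonempty, and using $\mathbf{a}\notin\ran(F^{\flat})$ to exclude the mixed case --- is handled correctly.
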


\begin{proposition}[Left composition]\label{prop:leftcomp56}
Let $F\colon X^*\to Y$ be a preassociative standard function and let $g\colon Y\to Y'$ be a function. If $g|_{\ran(F^{\flat})}$ is one-to-one, then the function $H\colon X^*\to Y$ defined by $H_0=\mathbf{a}$ for some $\mathbf{a}\in Y'\setminus\ran(g|_{\ran(F^{\flat})})$ and $H^{\flat}=g\circ F^{\flat}$ is standard and preassociative.
\end{proposition}

We now focus on those preassociative functions $F\colon X^*\to Y$ which are unarily quasi-range-idempotent, that is, such that $\ran(F_1)=\ran(F^{\flat})$. It was established in \cite{MarTeh,MarTeh2} that these functions are completely determined by their nullary, unary, and binary parts. Moreover, as the following theorem states, they can be factorized into compositions of associative $\varepsilon$-standard operations with one-to-one unary maps.

\begin{theorem}[{\cite{MarTeh,MarTeh2}}]\label{thm:FactoriAWRI-BPA237111}
Assume AC and let $F\colon X^*\to Y$ be a function. Consider the following assertions.
\begin{enumerate}
\item[(i)] $F$ is preassociative and unarily quasi-range-idempotent.

\item[(ii)] There exists an associative $\varepsilon$-standard operation $H\colon X^*\to X\cup\{\varepsilon\}$ and a one-to-one function $f\colon\ran(H^{\flat})\to Y$ such that $F^{\flat}=f\circ H^{\flat}$.
\end{enumerate}
Then $(i) \Rightarrow (ii)$. If $F$ is standard, then $(ii) \Rightarrow (i)$. Moreover, if condition (ii) holds, then we have $F^{\flat}=F_1\circ H^{\flat}$, $f=F_1|_{\ran(H^{\flat})}$, $f^{-1}\in Q(F_1)$, and we may choose $H^{\flat}=g\circ F^{\flat}$ for any $g\in Q(F_1)$.
\end{theorem}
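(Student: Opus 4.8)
The plan is to prove the two implications of Theorem~\ref{thm:FactoriAWRI-BPA237111} separately, and then verify the supplementary claims about the factorization. For the direction $(i)\Rightarrow(ii)$, I would start by observing that since $F$ is unarily quasi-range-idempotent, we have $\ran(F_1)=\ran(F^{\flat})$, so by Proposition~\ref{prop:ACqriTFAE3451} (using AC) there exists an $\varepsilon$-standard operation $H\colon X^*\to X\cup\{\varepsilon\}$ with $F^{\flat}=F_1\circ H^{\flat}$; concretely one may take $H^{\flat}=g\circ F^{\flat}$ for any $g\in Q(F_1)$, and this $H$ is unarily range-idempotent. The first substantial task is to show that this $H$ is \emph{associative}. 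By Proposition~\ref{prop:A-PA1}, since $H$ is already unarily range-idempotent, it suffices to show $H$ is preassociative. I would derive preassociativity of $H$ from that of $F$ together with the factorization $F^{\flat}=F_1\circ H^{\flat}$: roughly, if $H(\bfy)=H(\bfy')$ then applying $F_1$ gives $F(\bfy)=F(\bfy')$, whence preassociativity of $F$ yields $F(\bfx,\bfy,\bfz)=F(\bfx,\bfy',\bfz)$, and I would then transfer this equality back through $g$ to conclude $H(\bfx,\bfy,\bfz)=H(\bfx,\bfy',\bfz)$, taking care with the nullary cases.

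Once $H$ is known to be associative, I would define $f$ on $\ran(H^{\flat})$ by $f=F_1|_{\ran(H^{\flat})}$ and check that $F^{\flat}=f\circ H^{\flat}$, which is immediate from $F^{\flat}=F_1\circ H^{\flat}$ since every value of $H^{\flat}$ lies in $\ran(H^{\flat})$. The key remaining point for $(i)\Rightarrow(ii)$ is that this $f$ is \emph{one-to-one}. I would argue that if $f(H(\bfx))=f(H(\bfx'))$ for $\bfx,\bfx'\in X^*\setminus\{\varepsilon\}$, then $F(\bfx)=F(\bfx')$; I must then show this forces $H(\bfx)=H(\bfx')$. Here I expect to use that $H^{\flat}=g\circ F^{\flat}$: indeed $F(\bfx)=F(\bfx')$ immediately gives $g(F(\bfx))=g(F(\bfx'))$, i.e.\ $H(\bfx)=H(\bfx')$, so injectivity of $f$ follows directly. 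This also shows $f^{-1}\in Q(F_1)$, since $f^{-1}=H_1\circ\cdots$ behaves as a quasi-inverse; more precisely the relations between $g$, $F_1$ and $f$ give the two quasi-inverse conditions.

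For the converse $(ii)\Rightarrow(i)$ under the standardness hypothesis, I would first establish unary quasi-range-idempotence. Since $F^{\flat}=f\circ H^{\flat}$ with $f$ one-to-one and $H$ associative hence unarily range-idempotent (so $\ran(H_1)=\ran(H^{\flat})$), applying $f$ preserves ranges appropriately and yields $\ran(F_1)=f(\ran(H_1))=f(\ran(H^{\flat}))=\ran(F^{\flat})$. For preassociativity of $F$, suppose $F(\bfy)=F(\bfy')$; because $f$ is one-to-one on $\ran(H^{\flat})$ this gives $H(\bfy)=H(\bfy')$ (handling the case where one of the tuples is $\varepsilon$ via standardness, which guarantees $F$ takes its default value only at $\varepsilon$). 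Associativity of $H$ then gives $H(\bfx,\bfy,\bfz)=H(\bfx,\bfy',\bfz)$, and applying $f$ recovers $F(\bfx,\bfy,\bfz)=F(\bfx,\bfy',\bfz)$.

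The main obstacle I anticipate is the careful bookkeeping of the nullary part and the domains of $f$ and $g$, in particular making sure the equalities at the level of $F$ and $H$ transfer cleanly through the one-to-one maps exactly on the relevant ranges, and that the default-value/standardness conditions rule out the degenerate case where a nonempty tuple is confused with $\varepsilon$. The algebraic heart of the argument—that preassociativity of $F$ and preassociativity of $H$ are interchangeable via a one-to-one left factor—is conceptually short once the factorization $F^{\flat}=F_1\circ H^{\flat}=f\circ H^{\flat}$ with $H^{\flat}=g\circ F^{\flat}$ is in hand, so the difficulty is primarily in the precise handling of quasi-inverses and ranges rather than in any deep new idea.
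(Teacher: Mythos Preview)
The paper does not actually supply its own proof of this theorem: it is quoted verbatim from \cite{MarTeh,MarTeh2} and stated without argument, so there is no in-paper proof to compare against line by line. That said, your proposal is correct and is precisely the argument one would reconstruct from the surrounding material (Propositions~\ref{prop:ACqriTFAE3451} and~\ref{prop:A-PA1}): build $H^{\flat}=g\circ F^{\flat}$ via a quasi-inverse $g\in Q(F_1)$, transfer preassociativity of $F$ to $H$ through the factorizations $F^{\flat}=F_1\circ H^{\flat}$ and $H^{\flat}=g\circ F^{\flat}$, invoke Proposition~\ref{prop:A-PA1} to upgrade to associativity, and read off injectivity of $f=F_1|_{\ran(H^{\flat})}$ directly from $H^{\flat}=g\circ F^{\flat}$; the converse is the obvious pull-back of preassociativity through the injective $f$, with standardness disposing of the $\varepsilon$ case. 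Your handling of the nullary bookkeeping is adequate: for $(i)\Rightarrow(ii)$ the case $H(\bfy)=H(\bfy')$ with exactly one of $\bfy,\bfy'$ equal to $\varepsilon$ is vacuous since $H$ is $\varepsilon$-standard, and for $(ii)\Rightarrow(i)$ standardness of $F$ indeed forces $\bfy=\varepsilon\Leftrightarrow\bfy'=\varepsilon$ when $F(\bfy)=F(\bfy')$. The verification that $f^{-1}\in Q(F_1)$ is also straightforward once you note $\dom(f^{-1})=\ran(f)=\ran(F^{\flat})=\ran(F_1)$, so condition~(\ref{eq:defqi2}) is automatic and condition~(\ref{eq:defqi1}) is just $f\circ f^{-1}=\id$.
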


\begin{remark}\label{rem:fsd68s}
\begin{enumerate}
\item[(a)] If condition (ii) of Theorem~\ref{thm:FactoriAWRI-BPA237111} holds, then by Eq.~(\ref{eq:saf76sf5xx}) we see that $F$ can be computed recursively by
    $$
    F_n(x_1,\ldots, x_n) ~=~ F_2((g\circ F_{n-1})(x_1,\ldots, x_{n-1}),x_n),\qquad n\geqslant 3,
    $$
    where $g\in Q(F_1)$. A similar observation was already made in a more particular setting for the so-called quasi-associative functions; see \cite{Yag87}.

\item[(b)] It is necessary that $F$ be standard for the implication $(ii) \Rightarrow (i)$ to hold in Theorem~\ref{thm:FactoriAWRI-BPA237111}. Indeed, take $a\in\R$ and the function $F\colon\R^*\to\R$ defined by $F(\varepsilon)=a$ and $F^{\flat}(\bfx)=x_1$. Then condition (ii) holds for $H^{\flat}=F^{\flat}$ and $f=\id$. However, $F$ is neither standard nor preassociative since $F(a)=F(\varepsilon)$ and $F(ab)=a\neq b=F(b)$ for every $b\in\R\setminus\{a\}$.
\end{enumerate}
\end{remark}

\section{Unarily idempotizable functions}

In this section we examine the special case of unarily idempotizable standard functions, i.e., unarily quasi-range-idempotent standard functions with one-to-one unary parts.

As far as associative $\varepsilon$-standard operations are concerned, we have the following immediate result.

\begin{proposition}
If $F\colon X^*\to X\cup\{\varepsilon\}$ is an associative $\varepsilon$-standard operation and $F_1$ is one-to-one (hence $F$ is unarily idempotizable), then $F$ is unarily idempotent (i.e., $F_1=\id$).
\end{proposition}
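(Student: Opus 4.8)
The plan is to reduce the whole statement to the single elementary observation that an injective idempotent self-map must be the identity. Concretely, I would first establish that $F_1\circ F_1=F_1$, and then invoke the injectivity of $F_1$ to cancel the outer copy of $F_1$ and conclude $F_1=\id$.

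For the first step there are two equally short routes. The quickest is to invoke Proposition~\ref{prop:A-PA1}: since $F$ is an associative $\varepsilon$-standard operation, it is unarily range-idempotent, and (as recorded in the unarily range-idempotent item of Section~2) this forces $F_1\circ F_1=F_1$. Alternatively, one can extract the same identity directly from the defining equation~(\ref{eq;assoc}) by taking $\bfx=\bfz=\varepsilon$ and letting $\bfy$ be a single element $x\in X$, which yields $F(x)=F(F(x))$, that is, $F_1(x)=F_1(F_1(x))$ for every $x\in X$. I would present the first route as primary, since it simply names an already-proved property, and mention the direct computation as a self-contained alternative.

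The second step is then immediate: for every $x\in X$ we have $F_1\bigl(F_1(x)\bigr)=F_1(x)$, so the injectivity of $F_1$ gives $F_1(x)=x$, i.e.\ $F_1=\id$. The only point requiring a moment's care---and the closest thing to an obstacle in an otherwise one-line argument---is to check that the composite $F_1\circ F_1$ is well defined, i.e.\ that $F_1$ actually maps $X$ into $X$ rather than producing $\varepsilon$. This follows from standardness: because $F$ is $\varepsilon$-standard, $F(\bfx)=\varepsilon$ forces $\bfx=\varepsilon$, so $F_1(x)\in X$ for every $x\in X$ and the expression $F_1(F_1(x))$ is legitimate. Once this is in place the argument closes with no further computation, and the parenthetical claim that $F$ is unarily idempotizable is just the observation that associativity already yields unary quasi-range-idempotence, which together with the injectivity of $F_1$ is exactly the definition.
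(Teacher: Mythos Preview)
Your proposal is correct and follows essentially the same approach as the paper: both arguments first obtain $F_1\circ F_1=F_1$ (the paper takes this as known from the unarily range-idempotent property of associative $\varepsilon$-standard operations) and then cancel one copy of $F_1$ using injectivity to get $F_1=\id$. Your write-up is simply more explicit about where the idempotence identity comes from and about the well-definedness of $F_1\circ F_1$, whereas the paper compresses the whole argument into the single line $F_1=F_1^{-1}\circ F_1=\id$.
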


\begin{proof}
Since $F_1\circ F_1=F_1$ we simply have $F_1=F_1^{-1}\circ F_1=\id$.
\end{proof}

We also have the next result, from which we can immediately derive the following corollary.

\begin{theorem}[{\cite{MarTeh,MarTeh2}}]\label{thm:sdf87fs}
Let $F_1\colon X\to X$ and $F_2\colon X^2\to X$ be two operations. Then there exists an associative $\varepsilon$-standard operation $G\colon X^*\to X\cup\{\varepsilon\}$ such that $G_1=F_1$ and $G_2=F_2$ if and only if the following conditions hold:
\begin{enumerate}
\item[(i)] $F_1\circ F_1=F_1$ and $F_1\circ F_2=F_2$,

\item[(ii)] $F_2(x,y)=F_2(F_1(x),y)=F_2(x,F_1(y))$,

\item[(iii)] $F_2$ is associative.
\end{enumerate}
Such an operation $G$ is then uniquely determined by $G_n(x_1,\ldots,x_n)=G_2(G_{n-1}(x_1,\ldots,x_{n-1}),x_n)$ for $n\geqslant 3$.
\end{theorem}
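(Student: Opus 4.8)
The plan is to prove both directions of the equivalence, with the heavy lifting on the necessity side already handled by the recursive formula for associative operations.

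\medskip

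\textbf{Necessity.} Suppose $G\colon X^*\to X\cup\{\varepsilon\}$ is an associative $\varepsilon$-standard operation with $G_1=F_1$ and $G_2=F_2$. First I would recall from the discussion preceding Eq.~(\ref{eq:saf76sf5xx}) that $G$ satisfies $G_1\circ G^{\flat}=G^{\flat}$, so that $G$ is unarily range-idempotent; by the last bullet in Section~2 this yields $G_1\circ G_1=G_1$, i.e.\ $F_1\circ F_1=F_1$. Taking $\bfx=\varepsilon$ in the unarily range-idempotent identity applied to binary tuples gives $F_1\circ F_2=F_2$, so (i) holds. For (ii), I would specialize the associativity equation (\ref{eq;assoc}): writing $G(x,y)=G(x,G(y),\varepsilon)=G(x,G(y))$ and $G(x,y)=G(G(x),y)$ directly produces $F_2(x,y)=F_2(x,F_1(y))=F_2(F_1(x),y)$. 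Condition (iii) is just the restriction of associativity of $G$ to $3$-tuples in $X$: from (\ref{eq;assoc}) with singleton tuples, $G_2(G_2(x,y),z)=G(x,y,z)=G_2(x,G_2(y,z))$, so $F_2$ is associative.

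\medskip

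\textbf{Sufficiency.} Assume (i)--(iii). The idea is to \emph{define} $G$ by $G_1=F_1$, $G_2=F_2$, the recursion $G_n(x_1,\ldots,x_n)=G_2(G_{n-1}(x_1,\ldots,x_{n-1}),x_n)$ for $n\geqslant 3$, together with $G(\varepsilon)=\varepsilon$, and then verify that this $G$ is $\varepsilon$-standard and associative. I would first check $\varepsilon$-standardness: by construction $G(\varepsilon)=\varepsilon$, and one must see that $G^{\flat}$ never takes the value $\varepsilon$, which follows because $F_1,F_2$ are operations into $X$ and the recursion only composes $G_2$ with previously-defined $X$-valued parts. The main work is associativity, Eq.~(\ref{eq;assoc}), which I would reduce to the assertions of Proposition~\ref{prop:assocAltForms}; in fact it suffices to establish assertion (iii) there, namely $G(\bfx,\bfy)=G(G(\bfx),G(\bfy))$ for all $\bfx,\bfy\in X^*$, since $G(\varepsilon)=\varepsilon$.

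\medskip

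The proof of that identity is where the real obstacle lies, and I expect it to require a double induction on the lengths $|\bfx|$ and $|\bfy|$. The base cases $|\bfx|\leqslant 1$ or $|\bfy|\leqslant 1$ should reduce, via condition (ii) and the recursion, to the unary absorption $F_2(x,y)=F_2(F_1(x),y)=F_2(x,F_1(y))$ and the idempotence-type identities in (i). The inductive step peels the last entry off the longer tuple using the defining recursion and then repeatedly applies associativity of $F_2$ (condition (iii)) to reassociate the resulting nested $F_2$-expressions; conditions (i) and (ii) are needed to absorb the spurious $F_1$'s that appear when a block collapses to a single argument. I anticipate the bookkeeping of which $F_1$-absorption applies at each reassociation step to be the delicate part, since one must track that every intermediate value lies in $\ran(F^{\flat})$ so that $F_1$ acts as the identity there. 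Finally, \emph{uniqueness} of $G$ is immediate: any associative $\varepsilon$-standard $G$ with the prescribed $G_1,G_2$ must obey Eq.~(\ref{eq:saf76sf5xx}), which is exactly the stated recursion, so its higher parts are forced.
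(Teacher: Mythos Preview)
The paper does not actually prove this theorem; it is quoted from \cite{MarTeh,MarTeh2} without argument, so there is no in-paper proof to compare against. That said, your outline is essentially a correct reconstruction.

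Two small corrections are worth making. First, the sufficiency direction is easier than you suggest: a \emph{single} induction on $|\bfy|$ (for arbitrary fixed $\bfx$ with $|\bfx|\geqslant 1$) already gives $G_{m+n}(\bfx,\bfy)=F_2(G_m(\bfx),G_n(\bfy))$. The base $|\bfy|=1$ uses only condition (ii) and the recursion; the step peels off the last coordinate of $\bfy$, applies the recursion, the induction hypothesis, and then associativity of $F_2$, exactly as you describe. No genuine double induction is needed.

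Second, your worry that one must ``track that every intermediate value lies in $\ran(F^{\flat})$ so that $F_1$ acts as the identity there'' is misplaced. You do not need to know this in advance; rather, a separate one-line induction using (i) and the recursion shows $F_1\circ G_n=G_n$ for all $n\geqslant 1$ (the case $n\geqslant 3$ is $F_1(F_2(G_{n-1},x_n))=F_2(G_{n-1},x_n)$ by $F_1\circ F_2=F_2$). This disposes of the cases $|\bfx|=0$ or $|\bfy|=0$ in the identity $G(\bfx,\bfy)=G(G(\bfx),G(\bfy))$, and condition (ii) alone handles the unary absorptions in the remaining induction. Your necessity argument and the uniqueness remark are fine as written.
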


\begin{corollary}
Let $F_1\colon X\to X$ and $F_2\colon X^2\to X$ be two operations. Then there exists an associative and unarily idempotizable $\varepsilon$-standard operation $G\colon X^*\to X\cup\{\varepsilon\}$ such that $G_1=F_1$ and $G_2=F_2$ if and only if $F_1=\id$ and $F_2$ is associative. Such an operation $G$ is then uniquely determined by $G_n(x_1,\ldots,x_n)=G_2(G_{n-1}(x_1,\ldots,x_{n-1}),x_n)$ for $n\geqslant 3$.
\end{corollary}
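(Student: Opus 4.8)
The plan is to derive this corollary as a specialization of Theorem~\ref{thm:sdf87fs}, once we observe how the extra hypothesis of unary idempotizability simplifies conditions (i)--(iii). Recall that an operation is \emph{unarily idempotizable} precisely when it is unarily quasi-range-idempotent and its unary part is one-to-one; the point is that, for an \emph{associative} $\varepsilon$-standard operation, this forces $G_1 = \id$, exactly as recorded in the proposition immediately preceding Theorem~\ref{thm:sdf87fs}.

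For the sufficiency, I would assume $F_1 = \id$ and $F_2$ associative, and verify the three conditions of Theorem~\ref{thm:sdf87fs}. Condition~(i), namely $F_1\circ F_1 = F_1$ and $F_1\circ F_2 = F_2$, is immediate since $F_1 = \id$. Condition~(ii), namely $F_2(x,y) = F_2(F_1(x),y) = F_2(x,F_1(y))$, collapses to a triviality for the same reason. Condition~(iii) is exactly the hypothesis that $F_2$ is associative. Theorem~\ref{thm:sdf87fs} then produces a unique associative $\varepsilon$-standard operation $G$ with $G_1 = \id$, $G_2 = F_2$, and $G_n(x_1,\ldots,x_n) = G_2(G_{n-1}(x_1,\ldots,x_{n-1}),x_n)$ for $n\geqslant 3$. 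It remains to check that this $G$ is unarily idempotizable: since $G$ is associative, it is unarily range-idempotent by Proposition~\ref{prop:A-PA1}, hence unarily quasi-range-idempotent, and $G_1 = \id$ is one-to-one, so $G$ is unarily idempotizable by definition.

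For the necessity, suppose such a $G$ exists. Being unarily idempotizable, $G_1 = F_1$ is one-to-one, so the proposition preceding Theorem~\ref{thm:sdf87fs} gives $F_1 = G_1 = \id$. Moreover $G$ is associative and $\varepsilon$-standard, so $F_2 = G_2$ is an associative binary operation; this is either condition~(iii) of Theorem~\ref{thm:sdf87fs} or the correspondence between associative binary operations and associative $\varepsilon$-standard operations recalled in the introduction. Finally, the uniqueness of $G$ and its recursive formula are inherited directly from the corresponding statement in Theorem~\ref{thm:sdf87fs}.

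The argument presents no real obstacle; it is a routine simplification of Theorem~\ref{thm:sdf87fs}. The only point requiring care is confirming that the operation $G$ delivered by that theorem is genuinely unarily idempotizable, which hinges on remembering that associativity of an $\varepsilon$-standard operation already guarantees unary quasi-range-idempotence.
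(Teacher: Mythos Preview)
Your proof is correct and follows exactly the approach the paper intends: the corollary is stated without proof as an immediate consequence of the preceding proposition (associative $\varepsilon$-standard with one-to-one $F_1$ forces $F_1=\id$) together with Theorem~\ref{thm:sdf87fs}, and you have simply spelled out that derivation carefully in both directions.
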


Regarding preassociative and unarily quasi-range-idempotent standard functions, we have the following results.

\begin{proposition}\label{prop:dsa98as}
Assume AC and let $F\colon X^*\to Y$ be a function. If condition (ii) of Theorem~\ref{thm:FactoriAWRI-BPA237111} holds, then the following assertions are equivalent.
\begin{enumerate}
\item[(i)] $F_1$ is one-to-one,

\item[(ii)] $H_1$ is one-to-one,

\item[(iii)] $H_1=\id$.
\end{enumerate}
\end{proposition}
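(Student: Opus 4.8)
The plan is to read everything off from the factorization supplied by Theorem~\ref{thm:FactoriAWRI-BPA237111}. Assuming condition (ii) there, we have an associative $\varepsilon$-standard operation $H$ and a one-to-one map $f\colon\ran(H^{\flat})\to Y$ with $F^{\flat}=f\circ H^{\flat}$; the ``moreover'' part of that theorem also gives $F^{\flat}=F_1\circ H^{\flat}$ and $f=F_1|_{\ran(H^{\flat})}$. Restricting these two equalities of functions to the one-tuples $X^1=X$ produces the unary identities $F_1=f\circ H_1$ and $F_1=F_1\circ H_1$, on which the whole argument rests. I would also record one structural fact about $H$: being associative and $\varepsilon$-standard, $H$ is unarily range-idempotent by Proposition~\ref{prop:A-PA1}, hence $H_1\circ H_1=H_1$, i.e.\ $H_1$ is an idempotent self-map of $X$.

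With these identities in hand I would prove the equivalences along the cycle $(i)\Rightarrow(iii)\Rightarrow(ii)\Rightarrow(i)$. For $(i)\Rightarrow(iii)$, assume $F_1$ is one-to-one; from $F_1=F_1\circ H_1$ we get $F_1(x)=F_1(H_1(x))$ for every $x\in X$, and cancelling the injective $F_1$ yields $H_1(x)=x$, that is $H_1=\id$. The implication $(iii)\Rightarrow(ii)$ is immediate, since the identity map is one-to-one. Finally, for $(ii)\Rightarrow(i)$, assume $H_1$ is one-to-one; since $\ran(H_1)\subseteq\ran(H^{\flat})$, the restriction of $f$ to $\ran(H_1)$ is one-to-one, so the composition $F_1=f\circ H_1$ is one-to-one as a composite of injective maps.

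There is essentially no serious obstacle here: the content is entirely bookkeeping, and the only point demanding a little care is that $f$ is a priori one-to-one only on $\ran(H^{\flat})$, which is harmless because $H_1$ takes its values there. I would also note the alternative route $(ii)\Rightarrow(iii)$, which recovers a fact already observed for associative $\varepsilon$-standard operations: an idempotent self-map that is one-to-one must be the identity, since $H_1\circ H_1=H_1$ together with injectivity of $H_1$ forces $H_1=\id$. Either organization closes the equivalences, so I would present whichever keeps the exposition shortest.
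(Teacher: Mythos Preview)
Your proof is correct and follows essentially the same cycle $(i)\Rightarrow(iii)\Rightarrow(ii)\Rightarrow(i)$ as the paper, using the same identities $F_1=F_1\circ H_1$ and $F_1=f\circ H_1$; the paper merely phrases $(i)\Rightarrow(iii)$ as $H_1=F_1^{-1}\circ F_1=\id$, which is your cancellation argument written compositionally. Your extra care about $\ran(H_1)\subseteq\ran(H^{\flat})$ and the alternative $(ii)\Rightarrow(iii)$ via idempotence of $H_1$ are fine observations but not needed beyond what the paper records.
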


\begin{proof}
(i) $\Rightarrow$ (iii) $H_1=F_1^{-1}\circ F_1=\id$.

(iii) $\Rightarrow$ (ii) Trivial.

(ii) $\Rightarrow$ (i) $F_1=f\circ H_1$ is one-to-one as a composition of one-to-one functions.
\end{proof}

\begin{corollary}\label{cor:sfa5sfds}
Let $F\colon X^*\to Y$ be a function such that $F_1$ is one-to-one. Consider the following assertions.
\begin{enumerate}
\item[(i)] $F$ is preassociative and unarily quasi-range-idempotent.

\item[(ii)] There is a unique $\varepsilon$-standard operation $H\colon X^*\to X\cup\{\varepsilon\}$ such that $F^{\flat}=F_1\circ H^{\flat}$, namely $H^{\flat}=F_1^{-1}\circ F^{\flat}$. This operation is associative and unarily idempotent.
\end{enumerate}
Then $(i) \Rightarrow (ii)$. If $F$ is standard, then $(ii) \Rightarrow (i)$.
\end{corollary}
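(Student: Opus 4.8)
The plan is to derive this corollary by specializing Theorem~\ref{thm:FactoriAWRI-BPA237111} and Proposition~\ref{prop:dsa98as} to the case where $F_1$ is one-to-one. Since $F_1$ is injective, the set $Q(F_1)$ of quasi-inverses contains the genuine inverse $F_1^{-1}$ defined on $\ran(F_1)$, so no appeal to AC is needed to produce the witnessing operation $H$; this is what lets us drop the AC hypothesis that appears in the theorem.

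For the implication $(i)\Rightarrow(ii)$, I would start from the assumption that $F$ is preassociative and unarily quasi-range-idempotent and invoke Theorem~\ref{thm:FactoriAWRI-BPA237111}: condition (i) there holds, so condition (ii) holds, giving an associative $\varepsilon$-standard operation $H$ with $F^{\flat}=f\circ H^{\flat}=F_1\circ H^{\flat}$, and moreover we may take $H^{\flat}=g\circ F^{\flat}$ for any $g\in Q(F_1)$. Choosing $g=F_1^{-1}$ yields $H^{\flat}=F_1^{-1}\circ F^{\flat}$. By Proposition~\ref{prop:dsa98as}, since $F_1$ is one-to-one we get $H_1=\id$, so $H$ is unarily idempotent. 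For uniqueness, I would argue that any $\varepsilon$-standard $H$ satisfying $F^{\flat}=F_1\circ H^{\flat}$ must have $H^{\flat}=F_1^{-1}\circ F^{\flat}$: composing on the left with $F_1^{-1}$ is legitimate because $\ran(F^{\flat})=\ran(F_1)\subseteq\dom(F_1^{-1})$, the first equality holding by unary quasi-range-idempotence; this forces $H^{\flat}$ to equal $F_1^{-1}\circ F^{\flat}$ and hence determines $H$ completely.

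For the converse $(ii)\Rightarrow(i)$ under the standardness hypothesis, I would observe that $H^{\flat}=F_1^{-1}\circ F^{\flat}$ immediately gives $F^{\flat}=F_1\circ H^{\flat}$ with $f=F_1$ a bijection from $X$ onto $\ran(F^{\flat})$ (one-to-one by hypothesis, onto since $\ran(f\circ H^{\flat})=\ran(F^{\flat})$). Thus condition (ii) of Theorem~\ref{thm:FactoriAWRI-BPA237111} holds with the one-to-one function $f=F_1$, and since $F$ is standard the theorem delivers condition (i) there, namely that $F$ is preassociative and unarily quasi-range-idempotent.

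I do not expect a genuine obstacle here, since the corollary is essentially a bookkeeping specialization of the two cited results; the only point requiring care is the legitimacy of the left cancellation by $F_1^{-1}$ in the uniqueness argument, which hinges on checking that $\ran(F^{\flat})$ lies in the domain of $F_1^{-1}$. This in turn is exactly the equality $\ran(F^{\flat})=\ran(F_1)$ supplied by unary quasi-range-idempotence, so the argument closes cleanly once that identification is made explicit.
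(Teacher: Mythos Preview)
Your proposal is correct and follows essentially the same route as the paper: both derive the corollary by specializing Theorem~\ref{thm:FactoriAWRI-BPA237111} and Proposition~\ref{prop:dsa98as} to the injective-$F_1$ case, observing that AC is unnecessary because $F_1^{-1}$ itself serves as the quasi-inverse. Your version simply spells out the uniqueness and converse arguments in more detail than the paper does.
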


\begin{proof}
The proof follows from Theorem~\ref{thm:FactoriAWRI-BPA237111} and Proposition~\ref{prop:dsa98as}. Here AC is not required since the quasi-inverse of $F_1$ is simply an inverse.
\end{proof}

\begin{corollary}\label{cor:sdf87fspa1}
Let $F_1\colon X\to Y$ and $F_2\colon X^2\to Y$ be two functions and suppose that $F_1$ is one-to-one. Then there exists a preassociative and unarily quasi-range-idempotent standard function $G\colon X^*\to Y$ such that $G_1=F_1$ and $G_2=F_2$ if and only if $\ran(F_2)\subseteq\ran(F_1)$ and the function $H_2=F_1^{-1}\circ F_2$ is associative. In this case we have $G^{\flat}=F_1\circ H^{\flat}$, where $H\colon X^*\to X\cup\{\varepsilon\}$ is the unique associative $\varepsilon$-standard operation having $H_1=\id$ and $H_2$ as unary and binary parts, respectively.
\end{corollary}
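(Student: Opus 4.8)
The plan is to reduce the statement to the results already established for unarily idempotizable functions, namely Corollary~\ref{cor:sfa5sfds} together with the factorization of associative $\varepsilon$-standard operations through their unary and binary parts (Theorem~\ref{thm:sdf87fs} and the corollary following it). The bridge is the observation that, since $F_1$ is one-to-one, the hypothesis $\ran(F_2)\subseteq\ran(F_1)$ is exactly what makes the composition $H_2=F_1^{-1}\circ F_2$ well defined as a binary operation on $X$, and that $G$ is essentially recovered from the associated operation $H$ by a one-to-one left composition with $F_1$.

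For the necessity, I would suppose that such a $G$ exists. Since $G_1=F_1$ is one-to-one, the implication $(i)\Rightarrow(ii)$ of Corollary~\ref{cor:sfa5sfds} yields a unique associative and unarily idempotent $\varepsilon$-standard operation $H$ with $G^{\flat}=F_1\circ H^{\flat}$ and $H^{\flat}=F_1^{-1}\circ G^{\flat}$. I would then read off the two required conditions: first, unarily quasi-range-idempotence of $G$ gives $\ran(F_2)=\ran(G_2)\subseteq\ran(G^{\flat})=\ran(G_1)=\ran(F_1)$; second, $H_2=F_1^{-1}\circ G_2=F_1^{-1}\circ F_2$ is the binary part of the associative operation $H$, hence associative (condition (iii) of Theorem~\ref{thm:sdf87fs} applied to $H$).

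For the sufficiency, I would run this construction in reverse. Assuming $\ran(F_2)\subseteq\ran(F_1)$ and that $H_2=F_1^{-1}\circ F_2$ is associative, the corollary following Theorem~\ref{thm:sdf87fs}, applied with $H_1=\id$ and this $H_2$, produces a unique associative unarily idempotent $\varepsilon$-standard operation $H$ with these unary and binary parts. I then define $G$ by $G^{\flat}=F_1\circ H^{\flat}$ and a default value outside $\ran(G^{\flat})$. A short computation gives $G_1=F_1\circ\id=F_1$ and $G_2=F_1\circ F_1^{-1}\circ F_2=F_2$, the last equality using $\ran(F_2)\subseteq\ran(F_1)$. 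Since $F_1$ is one-to-one on $\ran(H^{\flat})$, Proposition~\ref{prop:leftcomp56} shows that $G$ is standard and preassociative; and because $G$ is standard with $G^{\flat}=G_1\circ H^{\flat}$ for the associative unarily idempotent operation $H$, the implication $(ii)\Rightarrow(i)$ of Corollary~\ref{cor:sfa5sfds} delivers unarily quasi-range-idempotence. This simultaneously establishes the final display $G^{\flat}=F_1\circ H^{\flat}$.

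The main obstacle I anticipate is not any single hard step but the bookkeeping around well-definedness and standardness: one must check that $F_1^{-1}\circ F_2$ really is a total binary operation on $X$ (which is where $\ran(F_2)\subseteq\ran(F_1)$ enters), that the identity $F_1\circ F_1^{-1}\circ F_2=F_2$ holds precisely because the inputs of $F_2$ land in $\ran(F_1)$, and that a default value outside $\ran(G^{\flat})$ can be chosen so that Proposition~\ref{prop:leftcomp56} applies. Everything else is a direct appeal to the quoted results.
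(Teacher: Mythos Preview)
Your proof is correct and follows essentially the same strategy as the paper's, which simply invokes Theorem~4.11 of \cite{MarTeh,MarTeh2} together with Proposition~\ref{prop:dsa98as}. You have in effect unfolded that external citation using the in-paper results (Corollary~\ref{cor:sfa5sfds}, the corollary to Theorem~\ref{thm:sdf87fs}, and Proposition~\ref{prop:leftcomp56}), yielding a more self-contained argument; your caveat about the existence of a default value outside $\ran(G^{\flat})$ is a genuine bookkeeping point that the paper's one-line proof likewise leaves to the cited reference.
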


\begin{proof}
The proof follows from Theorem~4.11 in \cite{MarTeh,MarTeh2} and Proposition~\ref{prop:dsa98as} in this paper.
\end{proof}

The following result is a reformulation of Corollary~\ref{cor:sdf87fspa1}, where $F_2$ is replaced with $H_2=F_1^{-1}\circ F_2$.

\begin{corollary}\label{cor:sdf87fspa}
Let $F_1\colon X\to Y$ and $H_2\colon X^2\to X$ be two functions and suppose $F_1$ is one-to-one. Then there exists a preassociative and unarily quasi-range-idempotent standard function $G\colon X^*\to Y$ such that $G_1=F_1$ and $G_2=F_1\circ H_2$ if and only if $H_2$ is associative. In this case we have $G^{\flat}=F_1\circ H^{\flat}$, where $H\colon X^*\to X\cup\{\varepsilon\}$ is the unique associative $\varepsilon$-standard operation having $H_1=\id$ and $H_2$ as unary and binary parts, respectively.
\end{corollary}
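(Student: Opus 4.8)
The plan is to deduce this corollary directly from Corollary~\ref{cor:sdf87fspa1} by means of the substitution signalled just before the statement, namely by applying that earlier corollary to the function $F_2 := F_1\circ H_2\colon X^2\to Y$. The task then reduces to verifying that, under this substitution, the two hypotheses of Corollary~\ref{cor:sdf87fspa1}---the range inclusion $\ran(F_2)\subseteq\ran(F_1)$ together with the associativity of $F_1^{-1}\circ F_2$---collapse into the single hypothesis that $H_2$ is associative, and that the factorization conclusion transfers unchanged.

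First I would dispose of the range condition: since $F_2=F_1\circ H_2$ is a composition whose outer map is $F_1$, the inclusion $\ran(F_2)\subseteq\ran(F_1)$ holds automatically, with no assumption on $H_2$. Next I would identify the associativity condition. Because $F_1$ is one-to-one, $F_1^{-1}$ is a genuine inverse on $\ran(F_1)$, so $F_1^{-1}\circ F_1=\id$ on $X$; and because $H_2$ takes its values in $X=\dom(F_1)$, the composition $F_1^{-1}\circ F_2=F_1^{-1}\circ F_1\circ H_2=H_2$ is well defined and equals the given $H_2$. Hence the associativity of the auxiliary function $F_1^{-1}\circ F_2$ appearing in Corollary~\ref{cor:sdf87fspa1} is precisely the associativity of $H_2$, and the claimed equivalence follows at once.

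Finally, the \emph{in this case} conclusion carries over verbatim: Corollary~\ref{cor:sdf87fspa1} already provides $G^{\flat}=F_1\circ H^{\flat}$ with $H$ the unique associative $\varepsilon$-standard operation whose unary and binary parts are $\id$ and $F_1^{-1}\circ F_2$, and under our substitution this binary part is exactly the prescribed $H_2$. I do not expect any genuine obstacle here, since the result is a pure reformulation of the preceding corollary; the only point requiring care is the domain/range bookkeeping for $F_1^{-1}$, which is harmless because $F_2$ lands in $\ran(F_1)$ while $H_2$ lands in $\dom(F_1)$.
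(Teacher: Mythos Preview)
Your proposal is correct and matches the paper's own approach exactly: the paper states this corollary as a direct reformulation of Corollary~\ref{cor:sdf87fspa1} under the substitution $F_2\mapsto F_1\circ H_2$ (equivalently $H_2=F_1^{-1}\circ F_2$) and gives no separate proof. Your verification that the range inclusion becomes automatic and that $F_1^{-1}\circ F_2=H_2$ is precisely the bookkeeping needed to justify this reformulation.
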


Corollary~\ref{cor:sdf87fspa} shows how the preassociative and unarily quasi-range-idempotent standard functions with one-to-one unary parts can be constructed. Just provide a nullary function $F_0$, a one-to-one unary function $F_1$, and a binary associative function $H_2$. Then $F^{\flat}=F_1\circ H^{\flat}$, where $H$ is the associative $\varepsilon$-standard operation having $H_1=\id$ and $H_2$ as unary and binary parts, respectively.

\section{Axiomatizations of some classes of associative and preassociative functions}

In this section we derive axiomatizations of classes of preassociative functions from certain existing axiomatizations of classes of associative operations. We restrict ourselves to a small number of classes. Further axiomatizations can be derived from known classes of associative operations.

The approach that we use here is the following. Starting from a class of binary associative operations $F\colon X^2\to X$, we identify all the possible associative $\varepsilon$-standard operations $F\colon X^*\to X\cup\{\varepsilon\}$ which extend these binary operations (this reduces to identifying the possible unary parts using Theorem~3.5 in \cite{MarTeh,MarTeh2}). If the unary parts are one-to-one, then we use Corollary~\ref{cor:sfa5sfds}; otherwise we use Theorem~\ref{thm:FactoriAWRI-BPA237111}.

\subsection{Preassociative functions built from Acz\'elian semigroups}
\label{sec:Acz}

Let us recall an axiomatization of the Acz\'elian semigroups due to Acz\'el~\cite{Acz49} (see also \cite{Acz04,CouMar12,CraPal89}).

\begin{proposition}[{\cite{Acz49}}]\label{prop:Aczel2}
Let $I$ be a nontrivial real interval (i.e., nonempty and not a singleton). An operation $H\colon I^2\to I$ is continuous, one-to-one in each argument, and associative if and only if there exists a continuous and strictly
monotonic function $\varphi\colon I\to J$ such that
\begin{equation}\label{eq:s7af5}
H(x,y)=\varphi^{-1}\left(\varphi(x)+\varphi(y)\right),
\end{equation}
where $J$ is a real interval of one of the forms $]{-\infty},b[$, $]{-\infty},b]$, $]a,\infty[$, $[a,\infty[$ or $\R ={]{-\infty},\infty[}$
$(b\leqslant 0\leqslant a)$. For such an operation $H$, the interval $I$ is necessarily open at least on one end. Moreover, $\varphi$ can be chosen to be strictly increasing.
\end{proposition}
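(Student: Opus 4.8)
The plan is to prove the two implications separately; the backward implication is routine and the forward one carries all the classical weight.

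\emph{Sufficiency.} Suppose $H(x,y)=\varphi^{-1}(\varphi(x)+\varphi(y))$ with $\varphi\colon I\to J$ continuous and strictly monotonic and $J$ one of the listed intervals. I would first check that these forms are exactly the nontrivial real intervals that are stable under addition (i.e.\ $u,v\in J\Rightarrow u+v\in J$): a bounded interval would force $\sup J\leqslant 0$ and $\inf J\geqslant 0$ simultaneously (push $u,v$ toward an endpoint), impossible for a nontrivial interval, so $J$ must be a half-line or $\R$, and stability then pins the finite endpoint to satisfy $b\leqslant 0$, resp.\ $a\geqslant 0$. This stability is precisely what guarantees $\varphi(x)+\varphi(y)\in J=\ran(\varphi)=\dom(\varphi^{-1})$, so $H$ is well defined and maps $I^2$ into $I$. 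Continuity of $H$ is immediate; injectivity in each argument follows from injectivity of $\varphi$ and cancellativity of $+$; and associativity is the one-line check that both $H(H(x,y),z)$ and $H(x,H(y,z))$ equal $\varphi^{-1}(\varphi(x)+\varphi(y)+\varphi(z))$.

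\emph{Necessity, setup.} Now assume $H$ is continuous, one-to-one in each argument, and associative. Since a continuous injection of a real interval is strictly monotone, each partial map $H(\cdot,y)$ and $H(x,\cdot)$ is strictly monotone, and a connectedness argument shows the sense of monotonicity is constant; reflecting the order on $I$ if needed, I would arrange that $H$ is strictly increasing in each argument. The engine is to encode associativity through the left translations $L_a:=H(a,\cdot)\colon I\to I$: associativity reads $L_a\circ L_b=L_{H(a,b)}$, and cancellativity makes $a\mapsto L_a$ injective, so $\{L_a\}$ is a semigroup of fixed-point-free strictly increasing self-homeomorphisms of $I$. Extracting commutativity of this family (hence automatic commutativity of $H$ itself) from the monotone structure is already a nontrivial point.

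\emph{Necessity, core.} The crux, and the step I expect to be the main obstacle, is to produce the additive generator $\varphi$, i.e.\ to solve the associativity equation $\varphi(H(x,y))=\varphi(x)+\varphi(y)$. The standard route first establishes an Archimedean property (no interior idempotent, and the iterated powers $H(x,\dots,x)$ exhausting the relevant end of $I$), expressing that the ordered semigroup $(I,H)$ has no nontrivial convex subsemigroup. One then conjugates a single translation $L_a$ to an honest shift by solving the Abel equation $\varphi\circ L_a=\varphi+c(a)$ with a continuous strictly increasing $\varphi$, and uses $L_a\circ L_b=L_{H(a,b)}$ to force $c$ additive and, after normalization, equal to $\varphi$ up to a constant; this is where continuity, monotonicity, and the Archimedean property must all be combined to guarantee existence and compatibility of a single $\varphi$ across the whole family $\{L_a\}$. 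Setting $J:=\ran(\varphi)$, the inclusion $H(I^2)\subseteq I$ makes $J$ stable under addition, so by the computation of the sufficiency part $J$ is one of the listed forms with $b\leqslant 0\leqslant a$.

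\emph{The two final claims.} Since $\varphi$ is a continuous strictly monotone bijection onto $J$, it is a homeomorphism of $I$ onto $J$; each admissible $J$ is open at its infinite end, so the corresponding end of $I$ cannot be attained (otherwise $\varphi$ would take a finite value there), whence $I$ is open at least on one end. Finally, if the $\varphi$ produced is strictly decreasing, then $\psi:=-\varphi$ is strictly increasing, $\ran(\psi)=-J$ is again of an admissible form, and $\psi^{-1}(\psi(x)+\psi(y))=\varphi^{-1}(\varphi(x)+\varphi(y))=H(x,y)$, so one may always choose $\varphi$ strictly increasing.
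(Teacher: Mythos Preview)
The paper does not prove this proposition: it is recalled verbatim from Acz\'el~\cite{Acz49} (see also \cite{Acz04,CouMar12,CraPal89}) and used as a black box to derive Theorem~\ref{thm:AczelPA}. There is therefore no ``paper's own proof'' to compare against.

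As for your sketch itself: the sufficiency direction is complete and correct, including the nice observation that the admissible forms of $J$ are exactly the nontrivial intervals stable under addition. For necessity you correctly identify the classical route (monotone partial maps, the translation semigroup $\{L_a\}$, Archimedean property, Abel equation) and, appropriately, flag the construction of the generator $\varphi$ as the nontrivial core rather than pretending it is routine. The closing arguments---that $I$ inherits an open end from the infinite end of $J$ via the homeomorphism $\varphi$, and that replacing $\varphi$ by $-\varphi$ lets one take $\varphi$ strictly increasing---are fine. What remains a genuine gap in a self-contained write-up is the Archimedean/Abel step: you would still need to supply (or cite) the actual construction of $\varphi$ and the verification that a single $\varphi$ works uniformly for all $L_a$, which is precisely the content of Acz\'el's original theorem.
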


According to Theorem~3.5 in \cite{MarTeh,MarTeh2}, every associative $\varepsilon$-standard operation $H\colon I^*\to I\cup\{\varepsilon\}$ whose binary part is of form (\ref{eq:s7af5}) must be unarily idempotent. Indeed, we must have
$$
\varphi^{-1}\left(\varphi(x)+\varphi(y)\right) ~=~ H_2(x,y) ~=~ H_2(H_1(x),y) ~=~ \varphi^{-1}\left(\varphi(H_1(x))+\varphi(y)\right)
$$
and hence $H_1(x)=x$. Thus, there is only one such associative $\varepsilon$-standard operation, which is defined by
$$
H_n(\bfx) ~=~ \varphi^{-1}\left(\varphi(x_1)+\cdots +\varphi(x_n)\right),\qquad n\in\N.
$$
Proposition~\ref{prop:dsa98as} and Corollary~\ref{cor:sfa5sfds} then show how a class of preassociative and unarily idempotizable standard functions can be constructed from $H$.

\begin{theorem}\label{thm:AczelPA}
Let $I$ be a nontrivial real interval (i.e., nonempty and not a singleton). A standard function $F\colon I^*\to\R$ is preassociative and unarily quasi-range-idempotent, and $F_1$ and $F_2$ are continuous and one-to-one in each argument if and only if there exist continuous and strictly monotonic functions $\varphi\colon I\to J$ and $\psi\colon J\to\R$ such that
$$
F_n(\bfx) ~=~ \psi(\varphi(x_1)+\cdots +\varphi(x_n)),\qquad n\in\N,
$$
where $J$ is a real interval of one of the forms $\left]-\infty, b\right[$, $\left]-\infty, b\right]$, $\left]a,\infty\right[$, $\left[a,\infty\right[$ or
$\R = \left]-\infty,\infty\right[$ ($b \leqslant 0 \leqslant a$). For such a function $F$, we have $\psi=F_1\circ\varphi^{-1}$ and $I$ is necessarily open at least on one end. Moreover, $\varphi$ can be chosen to be strictly increasing.
\end{theorem}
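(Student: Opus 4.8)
The plan is to reduce the statement to the Acz\'el characterization (Proposition~\ref{prop:Aczel2}) via the correspondence, recorded in Corollary~\ref{cor:sfa5sfds}, between preassociative unarily quasi-range-idempotent standard functions with one-to-one unary part and associative unarily idempotent $\varepsilon$-standard operations. Throughout I use the elementary analytic fact that a continuous one-to-one map on a real interval is strictly monotonic and has a continuous inverse on its (interval) range.

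For necessity, suppose $F$ is standard, preassociative, unarily quasi-range-idempotent, with $F_1$ and $F_2$ continuous and one-to-one in each argument. Since $F_1$ is one-to-one, Corollary~\ref{cor:sfa5sfds} provides a unique associative unarily idempotent $\varepsilon$-standard operation $H$ with $H^{\flat}=F_1^{-1}\circ F^{\flat}$, so that $H_1=\id$ and $H_2=F_1^{-1}\circ F_2$; this composition is well defined because unary quasi-range-idempotence gives $\ran(F_2)\subseteq\ran(F^{\flat})=\ran(F_1)=\dom(F_1^{-1})$. I would then verify that $H_2$ meets the hypotheses of Proposition~\ref{prop:Aczel2}: it is associative (as the binary part of the associative operation $H$); it is one-to-one in each argument (being $F_1^{-1}$ composed with $F_2$, each injective in each variable); and it is continuous, since $F_2$ is continuous and $F_1^{-1}$ is continuous on the interval $\ran(F_1)$. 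Proposition~\ref{prop:Aczel2} then furnishes a continuous strictly monotonic (and, if desired, strictly increasing) $\varphi\colon I\to J$, with $J$ of one of the listed forms and $I$ open on at least one end, such that $H_2(x,y)=\varphi^{-1}(\varphi(x)+\varphi(y))$. Because $H$ is the associative $\varepsilon$-standard operation with unary part $\id$ and this binary part, the recursion~(\ref{eq:saf76sf5xx}) yields $H_n(\bfx)=\varphi^{-1}(\varphi(x_1)+\cdots+\varphi(x_n))$ by a one-line induction. Setting $\psi=F_1\circ\varphi^{-1}\colon J\to\R$ and using $F^{\flat}=F_1\circ H^{\flat}$ gives $F_n(\bfx)=\psi(\varphi(x_1)+\cdots+\varphi(x_n))$, with $\psi$ continuous and strictly monotonic as a composition of such maps.

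For sufficiency, given continuous strictly monotonic $\varphi\colon I\to J$ and $\psi\colon J\to\R$ with $F_n(\bfx)=\psi(\varphi(x_1)+\cdots+\varphi(x_n))$, I would set $H_n(\bfx)=\varphi^{-1}(\varphi(x_1)+\cdots+\varphi(x_n))$, which by the discussion preceding the theorem is the (unique) associative unarily idempotent $\varepsilon$-standard operation whose binary part has the form~(\ref{eq:s7af5}). The case $n=1$ reads off $F_1=\psi\circ\varphi$, continuous and one-to-one, and a direct computation gives $F^{\flat}=F_1\circ H^{\flat}$. Since $F$ is standard and $F_1$ is one-to-one, the implication $(ii)\Rightarrow(i)$ of Corollary~\ref{cor:sfa5sfds} shows that $F$ is preassociative and unarily quasi-range-idempotent, and the explicit forms $F_1=\psi\circ\varphi$ and $F_2(x,y)=\psi(\varphi(x)+\varphi(y))$ make clear that $F_1$ and $F_2$ are continuous and one-to-one in each argument. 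The identity $\psi=F_1\circ\varphi^{-1}$ holds in both directions by construction.

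I expect the only genuine work to lie in the regularity bookkeeping of the necessity direction, namely transferring continuity and argumentwise injectivity from $F_2$ to $H_2=F_1^{-1}\circ F_2$ so that Proposition~\ref{prop:Aczel2} applies; everything else is a direct application of Corollary~\ref{cor:sfa5sfds} and the recursion~(\ref{eq:saf76sf5xx}). The enabling observation is that $F_1$, being continuous and one-to-one on the interval $I$, is strictly monotonic with a continuous inverse: this both lets $F_1^{-1}\circ F_2$ inherit the hypotheses of Acz\'el's theorem and guarantees that $\psi=F_1\circ\varphi^{-1}$ is continuous and strictly monotonic.
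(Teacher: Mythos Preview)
Your proposal is correct and follows essentially the same route as the paper: both directions go through Corollary~\ref{cor:sfa5sfds} to pass between $F$ and the associative unarily idempotent operation $H^{\flat}=F_1^{-1}\circ F^{\flat}$, and then invoke Proposition~\ref{prop:Aczel2} on $H_2$. You simply spell out in more detail the regularity transfer (that $F_1^{-1}$ is continuous because $F_1$ is continuous and injective on an interval) which the paper leaves as ``clearly.''
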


\begin{proof}
(Necessity) By Corollary~\ref{cor:sfa5sfds}, the $\varepsilon$-standard operation $H\colon X^*\to X\cup\{\varepsilon\}$ defined by $H^{\flat}=F_1^{-1}\circ F^{\flat}$ is associative. Moreover, $H_2$ is clearly continuous and one-to-one in each argument since so are $F_1^{-1}$ and $F_2$. We then conclude by Proposition~\ref{prop:Aczel2}.

(Sufficiency) By Corollary~\ref{cor:sfa5sfds} and Proposition~\ref{prop:Aczel2}, $F$ is preassociative and unarily quasi-range-idempotent. Moreover, $F_1$ and $F_2$ are continuous and one-to-one in each argument.
\end{proof}

\subsection{Preassociative functions built from t-norms and related operations}

Recall that a \emph{t-norm} (resp.\ \emph{t-conorm}) is an operation $H\colon [0,1]^2\to [0,1]$ which is nondecreasing in each argument, symmetric, associative, and such that $H(1,x)=x$ (resp.\ $H(0,x)=x$) for every $x\in [0,1]$. Also, a \emph{uninorm} is an operation $H\colon [0,1]^2\to [0,1]$ which is nondecreasing in each argument, symmetric, associative, and such that there exists $e\in\left]0,1\right[$ for which $H(e,x)=x$ for every $x\in [0,1]$. For general background see, e.g., \cite{AlsFraSch06,FodRou94,GraMarMesPap09,KlMes05,KleMesPap20,SchSkl83}.

Let us see how t-norms can be used to generate preassociative functions. We first observe that the associative $\varepsilon$-standard operation which extends any t-norm is unique and unarily idempotent; we call such an operation a (\emph{variadic}) \emph{t-norm}. Indeed, from the condition $H(1,x)=x$ it follows that $H(x)=H(H(1,x))=H(1,x)=x$. Using Corollary~\ref{cor:sfa5sfds}, we then obtain the following axiomatization.

\begin{theorem}\label{thm:pretnorm}
Let $F\colon [0,1]^*\to\R$ be a standard function such that $F_1$ is strictly increasing (resp.\ strictly decreasing). Then $F$ is preassociative and unarily quasi-range-idempotent, and $F_2$ is symmetric, nondecreasing (resp.\ nonincreasing) in each argument, and satisfies $F_2(1,x)=F_1(x)$ for every $x\in [0,1]$ if and only if there exist a strictly increasing (resp.\ strictly decreasing) function $f\colon [0,1]\to\R$ and a variadic t-norm $H\colon [0,1]^*\to [0,1]\cup\{\varepsilon\}$ such that $F^{\flat}=f\circ H^{\flat}$. In this case we have $f=F_1$.
\end{theorem}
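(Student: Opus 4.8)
The plan is to mirror exactly the strategy announced in the paragraph preceding the theorem: reduce the preassociative statement to the already-known associative case by factoring $F$ through an associative $\varepsilon$-standard operation. The key observation is that $F_1$ is strictly monotonic, hence one-to-one, so we are squarely in the situation of Corollary~\ref{cor:sfa5sfds} where the quasi-inverse of $F_1$ is an honest inverse and AC is not needed.

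For the necessity direction, I would first assume $F$ is preassociative, unarily quasi-range-idempotent, with $F_1$ strictly increasing and $F_2$ symmetric, nondecreasing in each argument, and satisfying $F_2(1,x)=F_1(x)$. By Corollary~\ref{cor:sfa5sfds}, since $F_1$ is one-to-one, there is a unique $\varepsilon$-standard operation $H\colon[0,1]^*\to[0,1]\cup\{\varepsilon\}$ with $H^{\flat}=F_1^{-1}\circ F^{\flat}$, and this $H$ is associative and unarily idempotent (so $H_1=\id$). It remains to verify that $H_2=F_1^{-1}\circ F_2$ is a binary t-norm, i.e.\ that it inherits symmetry, monotonicity, the neutral-element condition $H_2(1,x)=x$, and associativity. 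Symmetry of $H_2$ follows from symmetry of $F_2$; monotonicity follows because $F_1^{-1}$ is increasing (as $F_1$ is) and $F_2$ is nondecreasing in each argument; the neutral element comes from $H_2(1,x)=F_1^{-1}(F_2(1,x))=F_1^{-1}(F_1(x))=x$; and associativity of $H_2$ is exactly part of the conclusion of Corollary~\ref{cor:sfa5sfds}. Thus $H$ is a variadic t-norm, and setting $f=F_1$ gives $F^{\flat}=f\circ H^{\flat}$ with $f$ strictly increasing.

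For the sufficiency direction, I would suppose $F^{\flat}=f\circ H^{\flat}$ where $H$ is a variadic t-norm and $f$ is strictly increasing, and check that $F$ has the stated properties. Since $H$ is a variadic t-norm it is associative and unarily idempotent, and $f$ is one-to-one; Corollary~\ref{cor:sfa5sfds} (applied with the roles reversed, i.e.\ using the implication $(ii)\Rightarrow(i)$ since $F$ is standard) immediately yields that $F$ is preassociative and unarily quasi-range-idempotent, with $F_1=f$. Then $F_2=f\circ H_2$ is symmetric because $H_2$ is, nondecreasing in each argument because $f$ is increasing and $H_2$ is nondecreasing, and satisfies $F_2(1,x)=f(H_2(1,x))=f(x)=F_1(x)$ using $H_2(1,x)=x$. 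The decreasing case is handled by the obvious sign changes.

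The main obstacle, such as it is, lies in the necessity direction: one must carefully confirm that every defining property of a (binary) t-norm transfers from $F_2$ to $H_2=F_1^{-1}\circ F_2$, paying attention to the direction of monotonicity. Since $F_1^{-1}$ is strictly increasing precisely when $F_1$ is, the monotonicity of $H_2$ matches that of $F_2$; the subtlety in the strictly decreasing case is that there $F_1^{-1}$ is decreasing and $F_2$ is nonincreasing, so the composition $H_2=F_1^{-1}\circ F_2$ is again nondecreasing, as a t-norm requires. Everything else is a routine unwinding of the factorization $F^{\flat}=F_1\circ H^{\flat}$ together with the uniqueness and associativity already guaranteed by Corollary~\ref{cor:sfa5sfds}.
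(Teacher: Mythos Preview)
Your proposal is correct and follows essentially the same approach as the paper's own proof: both directions are obtained by invoking Corollary~\ref{cor:sfa5sfds} and then checking that the defining properties of a t-norm transfer along the factorization $H^{\flat}=F_1^{-1}\circ F^{\flat}$ (resp.\ $F^{\flat}=f\circ H^{\flat}$). Your write-up is simply more explicit than the paper's terse ``clearly'' about why symmetry, monotonicity, and the neutral-element condition pass through, and you correctly note the sign subtlety in the strictly decreasing case.
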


\begin{proof}
(Necessity) By Corollary~\ref{cor:sfa5sfds}, the $\varepsilon$-standard operation $H\colon [0,1]^*\to [0,1]\cup\{\varepsilon\}$ defined by $H^{\flat}=F_1^{-1}\circ F^{\flat}$ is associative. Moreover, $H_2$ is clearly symmetric, nondecreasing in each argument, and such that $H_2(1,x)=x$. Hence $H$ is a t-norm.

(Sufficiency) By Corollary~\ref{cor:sfa5sfds}, $F$ is preassociative and unarily quasi-range-idemp{\-}otent. Moreover, $F_1$ and $F_2$ clearly satisfy the stated properties.
\end{proof}

If we replace the condition ``$F_2(1,x)=F_1(x)$'' in Theorem~\ref{thm:pretnorm} with ``$F_2(0,x)=F_1(x)$'' (resp.\ ``$F_2(e,x)=F_1(x)$ for some $e\in\left]0,1\right[$''), then the result still holds provided that the t-norm is replaced with a t-conorm (resp.\ a uninorm).

\subsection{Preassociative functions built from Ling's axiomatizations}

The next proposition gives an axiomatization due to Ling \cite{Lin65}; 
see also \cite{Bac86,Mar00}. We remark that this characterization can be easily deduced from previously known results on topological semigroups (see Mostert and Shields \cite{MosShi57}). However, Ling's proof is elementary.

\begin{proposition}[{\cite{Lin65}}]\label{prop:Linga}
Let $[a,b]$ be a real closed interval. An operation $H\colon [a,b]^2\to [a,b]$ is continuous, nondecreasing in each argument, associative, and such that $H(b,x)=x$ for all $x\in [a,b]$ and $H(x,x)<x$ for all $x\in \left]a,b\right[$, if and only if there exists a continuous and strictly decreasing function $\varphi\colon [a,b]\to\left[0,\infty\right[$, with $\varphi(b)=0$, such that
$$
H(x,y) ~=~ \varphi^{-1}(\min\{\varphi(x)+\varphi(y),\varphi(a)\}).
$$
\end{proposition}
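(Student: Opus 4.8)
The plan is to treat the two implications separately: the \emph{if} part is a direct verification, while the \emph{only if} part requires constructing an additive generator from $H$ and is where essentially all the work lies. For sufficiency I would take $\varphi$ as given and check each listed property of $H(x,y)=\varphi^{-1}(\min\{\varphi(x)+\varphi(y),\varphi(a)\})$. Continuity and nondecreasingness in each argument are inherited from $\varphi$, $\varphi^{-1}$, and $\min$; the neutral-element condition $H(b,x)=x$ is immediate from $\varphi(b)=0$; and $H(x,x)<x$ on $]a,b[$ follows because $\varphi(x)>0$ there forces $\min\{2\varphi(x),\varphi(a)\}>\varphi(x)$, so the strictly decreasing $\varphi^{-1}$ returns a value below $x$. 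The only step deserving a line of algebra is associativity, which reduces to the truncation identity $\min\{\min\{u+v,\varphi(a)\}+w,\varphi(a)\}=\min\{u+v+w,\varphi(a)\}$ (with $u=\varphi(x)$, $v=\varphi(y)$, $w=\varphi(z)$), so that both bracketings equal $\varphi^{-1}(\min\{u+v+w,\varphi(a)\})$.

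For necessity I would first extract the order-theoretic and algebraic structure forced by the hypotheses. Monotonicity in the first argument together with the left-neutrality $H(b,a)=a$ gives $H(a,a)\le H(b,a)=a$, hence $H(a,a)=a$; the genuinely substantial structural facts, however, are that $a$ is a two-sided annihilator, that $H$ is commutative, and that $H$ is cancellative on the interior. These do not follow from a one-line computation and are part of the standard analysis of continuous, naturally ordered, associative operations with identity. Next I would establish the Archimedean behaviour via the iterates $x^{(1)}=x$, $x^{(n+1)}=H(x^{(n)},x)$: the inequality $H(x,x)<x$ together with monotonicity forces $(x^{(n)})_n$ to be strictly decreasing for $x\in]a,b[$, and continuity of $H$ forces its limit $\ell$ to satisfy $H(\ell,x)=\ell$, which combined with $H(x,x)<x$ excludes $\ell>a$; hence $x^{(n)}\to a$.

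The decisive step, and the one I expect to be the main obstacle, is the construction of $\varphi$ itself. The two technical inputs are divisibility and cancellativity. Divisibility (existence of $H$-roots) I would obtain from the intermediate value theorem applied to the continuous diagonal $\delta(x)=H(x,x)$, which maps $[a,b]$ onto $[a,b]$ since $\delta(a)=a$ and $\delta(b)=b$; iterating produces all dyadic roots, and cancellativity makes them unique on the interior. With these in hand I would fix an interior base point $c$, normalize $\varphi(c)=1$, define $\varphi$ on the dyadic powers $c^{(m/2^{n})}$ by $\varphi(c^{(m/2^{n})})=m/2^{n}$, verify consistency through uniqueness of roots, and extend $\varphi$ to all of $[a,b]$ by monotonicity and continuity. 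The functional equation $\varphi(H(x,y))=\min\{\varphi(x)+\varphi(y),\varphi(a)\}$ then holds on dyadic powers and passes to the whole square by density and continuity, after which rearrangement yields the stated formula; the truncation at $\varphi(a)$ is exactly the manifestation of $a$ being an annihilator (zero divisors appear once $\varphi(x)+\varphi(y)\ge\varphi(a)$).

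The hardest part throughout is verifying that the $\varphi$ built from dyadic powers is well defined, strictly decreasing, and continuous, and that $H$ is commutative and cancellative so that the root-extraction is unambiguous. This is precisely the elementary but delicate core of Ling's argument; alternatively one could bypass the explicit dyadic construction by invoking the Mostert--Shields structure theorem, which identifies such an operation with a standard one-parameter semigroup and hands over the generator $\varphi$ directly.
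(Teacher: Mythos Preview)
The paper does not supply its own proof of this proposition: it is quoted from Ling~\cite{Lin65} and left unproved, with only the remark that it can alternatively be deduced from the Mostert--Shields structure theory for topological semigroups. There is therefore nothing in the paper to compare against, but your sketch is a faithful outline of Ling's elementary argument, and you correctly flag the Mostert--Shields shortcut exactly as the paper does.

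Two minor points worth tightening. In the Archimedean step, the relation $H(\ell,x)=\ell$ that you obtain from continuity does not by itself clash with the hypothesis $H(x,x)<x$; one more pass is needed---either note that associativity gives $x^{(2n)}=H(x^{(n)},x^{(n)})$ and let $n\to\infty$ to obtain $H(\ell,\ell)=\ell$, or iterate $H(\ell,x^{(n)})=\ell$ and pass to the limit---after which $\ell\in\left]a,b\right[$ contradicts $H(\ell,\ell)<\ell$. Second, ``cancellative on the interior'' should be understood as \emph{conditional} cancellativity (cancellation whenever the product lies strictly above $a$); full cancellation on $\left]a,b\right[^2$ fails precisely in the nilpotent case you correctly identify with the truncation at $\varphi(a)$. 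With these small adjustments your plan is sound.
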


Proceeding as in Section~\ref{sec:Acz}, we obtain the following characterization.

\begin{theorem}
Let $[a,b]$ be a real closed interval and let $F\colon [a,b]^*\to\R$ be a standard function such that $F_1$ is strictly increasing (resp.\ strictly decreasing). Then $F$ is unarily quasi-range idempotent and preassociative, and $F_2$ is continuous and nondecreasing (resp.\ nonincreasing) in each argument, $F_2(b,x)=F_1(x)$ for every $x\in [a,b]$, $F_2(x,x)<F_1(x)$ (resp.\ $F_2(x,x)>F_1(x)$) for every $x\in\left]a,b\right[$ if and only if there exist a continuous and strictly decreasing function $\varphi\colon [a,b]\to\left[0,\infty\right[$, with $\varphi(b)=0$, and a strictly decreasing (resp.\ strictly increasing) function $\psi\colon [0,\varphi(a)]\to\R$ such that
$$
F_n(\bfx) ~=~ \psi(\min\{\varphi(x_1)+\cdots +\varphi(x_n),\varphi(a)\}),\qquad n\in\N.
$$
For such a function, we have $\psi=F_1\circ \varphi^{-1}$.
\end{theorem}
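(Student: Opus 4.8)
The plan is to mirror, step for step, the proof of Theorem~\ref{thm:AczelPA}, replacing Acz\'el's Proposition~\ref{prop:Aczel2} by Ling's Proposition~\ref{prop:Linga}. The pivot is Corollary~\ref{cor:sfa5sfds}: since $F_1$ is strictly monotonic it is one-to-one, so $F$ is preassociative and unarily quasi-range-idempotent precisely when the $\varepsilon$-standard operation $H$ defined by $H^{\flat}=F_1^{-1}\circ F^{\flat}$ is associative and unarily idempotent. All the analytic hypotheses on $F$ can then be transported to $H_2=F_1^{-1}\circ F_2$, where Ling's theorem applies, and conversely.

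For necessity, I would first note that $F_1$ is automatically continuous: the hypothesis $F_2(b,x)=F_1(x)$ exhibits $F_1$ as the map $x\mapsto F_2(b,x)$, which is continuous. Hence $F_1$ is a continuous strictly monotonic bijection onto an interval and $F_1^{-1}$ is continuous. Then $H_2=F_1^{-1}\circ F_2$ is continuous and, being a composition of the order-preserving (resp.\ order-reversing) $F_1^{-1}$ with the nondecreasing (resp.\ nonincreasing) $F_2$, is nondecreasing in each argument in both cases; it is associative by Corollary~\ref{cor:sfa5sfds}. The two remaining conditions translate cleanly: $H_2(b,x)=F_1^{-1}(F_2(b,x))=F_1^{-1}(F_1(x))=x$, and applying $F_1^{-1}$ to $F_2(x,x)<F_1(x)$ (resp.\ $F_2(x,x)>F_1(x)$) yields $H_2(x,x)<x$ on $\left]a,b\right[$. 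Ling's Proposition~\ref{prop:Linga} now furnishes a continuous strictly decreasing $\varphi\colon[a,b]\to\left[0,\infty\right[$ with $\varphi(b)=0$ and $H_2(x,y)=\varphi^{-1}(\min\{\varphi(x)+\varphi(y),\varphi(a)\})$. Setting $\psi=F_1\circ\varphi^{-1}$ on $[0,\varphi(a)]=\ran(\varphi)$ gives the announced map, which is strictly decreasing (resp.\ increasing) since $\varphi^{-1}$ is strictly decreasing, and continuous because $F_1$ is.

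It then remains to pass from the binary to the $n$-ary form. Because $H$ is associative and unarily idempotent, Eq.~(\ref{eq:saf76sf5xx}) gives $H_n=H_2(H_{n-1}(\,\cdot\,),\,\cdot\,)$, and a short induction shows the nested truncations collapse: writing $\sigma_n=\varphi(x_1)+\cdots+\varphi(x_n)$, one checks $\min\{\min\{\sigma_{n-1},\varphi(a)\}+\varphi(x_n),\varphi(a)\}=\min\{\sigma_n,\varphi(a)\}$ (distinguishing the cases $\sigma_{n-1}\leqslant\varphi(a)$ and $\sigma_{n-1}>\varphi(a)$), so that $H_n(\bfx)=\varphi^{-1}(\min\{\sigma_n,\varphi(a)\})$ and hence $F_n=F_1\circ H_n=\psi(\min\{\sigma_n,\varphi(a)\})$. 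For sufficiency I would run this backwards: given $\varphi,\psi$, put $F_1=\psi\circ\varphi$ (one-to-one as a composite of strictly monotonic maps) and define $H_2$ by Ling's formula, so that $F_2=F_1\circ H_2$ with $H_2$ associative by Proposition~\ref{prop:Linga}; Corollary~\ref{cor:sdf87fspa} then produces a preassociative, unarily quasi-range-idempotent standard function $G$ with $G_1=F_1$, $G_2=F_2$, and $G^{\flat}=F_1\circ H^{\flat}$, and the same induction identifies $G_n$ with the given $F_n$. The stated properties of $F_2$ — continuity, monotonicity, $F_2(b,x)=F_1(x)$, and $F_2(x,x)<F_1(x)$ on $\left]a,b\right[$ — follow by direct computation from Ling's formula.

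The genuinely delicate points are two. First, the sign bookkeeping across the increasing and decreasing cases: the single conclusion $H_2(x,x)<x$ must emerge from opposite inequalities on $F_2(x,x)$, and this works only because $F_1^{-1}$ reverses order exactly when $F_2$ does. Second, one must be attentive to the continuity of $\psi$: the necessity direction produces $\psi=F_1\circ\varphi^{-1}$ continuous, and this continuity is precisely what is needed in the converse to conclude that $F_2=\psi(\min\{\varphi(x)+\varphi(y),\varphi(a)\})$ is continuous (continuity of $\varphi$ and of the truncated sum alone does not suffice), so the equivalence is consistent exactly when $\psi$ is read as continuous. Apart from these, every step is a faithful transcription of the Acz\'elian argument, with the truncated-sum collapse replacing plain additivity.
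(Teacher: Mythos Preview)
Your proposal is correct and follows exactly the approach the paper intends: the paper gives no explicit proof here, stating only ``Proceeding as in Section~\ref{sec:Acz}'', and your argument is precisely that transcription, using Corollary~\ref{cor:sfa5sfds} to pass to $H^{\flat}=F_1^{-1}\circ F^{\flat}$ and then invoking Proposition~\ref{prop:Linga} in place of Proposition~\ref{prop:Aczel2}. Your observation that continuity of $F_1$ must be \emph{derived} from $F_1(x)=F_2(b,x)$ (since, unlike Theorem~\ref{thm:AczelPA}, it is not among the hypotheses) and your remark that the stated equivalence only closes if $\psi$ is read as continuous are both correct and in fact sharpen the paper's treatment.
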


\subsection{Preassociative functions built from range-idempotent functions}

Recall that an $\varepsilon$-standard operation $F\colon X^*\to X\cup\{\varepsilon\}$ is said to be \emph{range-idempotent} \cite{CouMar11} if $F(k\Cdot x)=x$ for every $x\in\ran(F^{\flat})$ and every $k\in\N$, where $F(k\Cdot x)$ stands for the unary function $F(x,\ldots,x)$ obtained by repeating $k$ times the variable $x$. Equivalently, $F(k\Cdot F(\bfx))=F(\bfx)$ for every $\bfx\in X^*$ and every $k\in\N$.

We say that a function $F\colon X^*\to Y$ is \emph{invariant by replication} if for every $\bfx\in X^*$ and every $k\in\N$ we have $F(k\Cdot\bfx)=F(\bfx)$, where $F(k\Cdot\bfx)$ stands for the function $F(\bfx,\ldots,\bfx)$ obtained by repeating $k$ times the tuple $\bfx$. More generally, we say that a function $F\colon X^*\to Y$ is \emph{preinvariant by replication} if for every $\bfx,\bfy\in X^*$ and every $k\in\N$ we have that $F(\bfx)=F(\bfy)$ implies $F(k\Cdot\bfx)=F(k\Cdot\bfy)$. Clearly, if a function $F\colon X^*\to Y$ is invariant by replication or preassociative, then it is preinvariant by replication.

Also, if an $\varepsilon$-standard operation $F\colon X^*\to X\cup\{\varepsilon\}$ is unarily range-idempotent and invariant by replication, then it is range-idempotent. Indeed, we simply have $F(k\Cdot F(\bfx))=F(F(\bfx))=F(\bfx)$ for every $\bfx\in X^*$ and every $k\in\N$.

Recall \cite{CouMar11} that an associative $\varepsilon$-standard operation $F\colon X^*\to X\cup\{\varepsilon\}$ is range-idempotent if and only if it is invariant by replication. Moreover, if any of these conditions holds, then we have $F(\bfx,F(\bfx,y,\bfz),\bfz)=F(\bfx,y,\bfz)$ for all $y\in X$ and all $\bfx,\bfz\in X^*$.

\begin{lemma}\label{lemma:7sdf5}
Let $F\colon X^*\to X\cup\{\varepsilon\}$ be an $\varepsilon$-standard operation. The following assertions are equivalent.
\begin{enumerate}
\item[(i)] $F$ is associative and range-idempotent.

\item[(ii)] $F$ is associative and $F(F(x),F(x))=F(x)$ for every $x\in X$.

\item[(iii)] $F$ is preassociative, unarily quasi-range-idempotent, and range-idempotent.

\item[(iv)] $F$ is preassociative, unarily quasi-range-idempotent, and satisfies $F_1\circ F_1=F_1$ and $F(F(x),F(x))=F(x)$ for every $x\in X$.
\end{enumerate}
\end{lemma}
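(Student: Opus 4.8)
The plan is to prove the chain of implications $(i)\Rightarrow(iii)\Rightarrow(iv)\Rightarrow(ii)\Rightarrow(i)$, which makes all four assertions equivalent. Three of these four links are short bookkeeping arguments that merely repackage the characterizations already available, and the genuine content sits in the last one. For $(i)\Rightarrow(iii)$ I would invoke Proposition~\ref{prop:A-PA1}: associativity of an $\varepsilon$-standard operation is equivalent to preassociativity together with unary range-idempotency, and the latter immediately yields unary quasi-range-idempotency, while range-idempotency is carried over verbatim from $(i)$. For $(iii)\Rightarrow(iv)$ I would simply specialize the defining identity of range-idempotency: since $\ran(F_1)\subseteq\ran(F^{\flat})$, every $x\in X$ satisfies $F_1(x)\in\ran(F^{\flat})$, so taking $k=1$ in $F(k\Cdot F_1(x))=F_1(x)$ gives $F_1\circ F_1=F_1$ and taking $k=2$ gives $F(F(x),F(x))=F(x)$; preassociativity and unary quasi-range-idempotency carry over from $(iii)$. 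For $(iv)\Rightarrow(ii)$ I would feed the hypotheses into the two earlier propositions: by Proposition~\ref{prop:22f1f1f1}, unary quasi-range-idempotency together with $F_1\circ F_1=F_1$ forces unary range-idempotency, and then Proposition~\ref{prop:A-PA1} upgrades preassociativity to associativity, while the pointwise identity $F(F(x),F(x))=F(x)$ is retained from $(iv)$.

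The heart of the argument is $(ii)\Rightarrow(i)$, namely bootstrapping the single identity $F(F(x),F(x))=F(x)$ into full range-idempotency. Two preliminary observations drive this. First, because $F$ is $\varepsilon$-standard we have $\ran(F^{\flat})\subseteq X$, so the hypothesis, although stated for arguments in $X$, may in particular be applied to any $y\in\ran(F^{\flat})$. Second, associativity yields unary range-idempotency through Proposition~\ref{prop:A-PA1}, whence $F_1(y)=y$ for every $y\in\ran(F^{\flat})$. Combining these, for $y\in\ran(F^{\flat})$ the hypothesis at $x=y$ reads $F(F(y),F(y))=F(y)$, which collapses to $F(y,y)=y$ after substituting $F(y)=y$. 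I would then run an induction on $k$ using the associative recursion~(\ref{eq:saf76sf5xx}): the base cases $k=1,2$ are $F_1(y)=y$ and $F(y,y)=y$, and for $k\geqslant 2$ we obtain $F((k+1)\Cdot y)=F_2(F(k\Cdot y),y)=F_2(y,y)=y$. This establishes $F(k\Cdot y)=y$ for all $y\in\ran(F^{\flat})$ and all $k\in\N$, which is exactly range-idempotency, closing the cycle.

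The only delicate point, and the step I expect to require the most care, is $(ii)\Rightarrow(i)$, specifically the realization that the pointwise hypothesis can legitimately be evaluated at elements of $\ran(F^{\flat})$ rather than only at raw inputs; this rests on the two facts that $\ran(F^{\flat})\subseteq X$ and that $F_1$ fixes $\ran(F^{\flat})$ pointwise. Once this is in place, associativity propagates the identity from $k=2$ to arbitrary $k$ for free, and every remaining implication is an immediate consequence of Propositions~\ref{prop:A-PA1} and~\ref{prop:22f1f1f1}.
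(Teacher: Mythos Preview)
Your proposal is correct and follows essentially the same route as the paper: the identical cycle $(i)\Rightarrow(iii)\Rightarrow(iv)\Rightarrow(ii)\Rightarrow(i)$, with the first three links handled via Propositions~\ref{prop:22f1f1f1} and~\ref{prop:A-PA1}, and $(ii)\Rightarrow(i)$ obtained by induction on $k$ using the associative recursion. The only cosmetic difference is that the paper runs the induction on $F(k\Cdot F(x))$ for $x\in X$ whereas you run it on $F(k\Cdot y)$ for $y\in\ran(F^{\flat})$; since associativity gives $\ran(F_1)=\ran(F^{\flat})$, these formulations are interchangeable.
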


\begin{proof}
(i) $\Rightarrow$ (iii) $\Rightarrow$ (iv) $\Rightarrow$ (ii) Follows from Propositions~\ref{prop:22f1f1f1} and \ref{prop:A-PA1}.

(ii) $\Rightarrow$ (i) Since $F$ is associative, it is unarily quasi-range-idempotent. Thus, we only need to prove that $F(k\Cdot F(x))=F(x)$ for every $x\in X$ and every $k\in\N$. Due to our assumptions, this condition holds for $k=1$ and $k=2$. Now, suppose that it holds for some $k\geqslant 2$. We then have $F((k+1)\Cdot F(x))=F_2(F_k(k\Cdot F(x)),F(x))=F_2(F(x),F(x))=F(x)$ and hence the condition holds for $k+1$.
\end{proof}

The following result yields an axiomatization of a class of associative and range-idempotent $\varepsilon$-standard operations (hence invariant by replication) over bounded chains. If $X$ represents a bounded chain, we denote the classical lattice operations on $X$ by $\wedge$ and $\vee$. Also, the ternary median function on $X$ is the function $\med\colon X^3\to X$ defined by
$$
\med(x,y,z) ~=~ (x\vee y)\wedge (y\vee z)\wedge (z\vee x).
$$

\begin{proposition}\label{prop:sda5f7sfdsa}
Let $H\colon X^*\to X\cup\{\varepsilon\}$ be an $\varepsilon$-standard operation over a bounded chain $X$. Then the following three assertions are equivalent.
\begin{enumerate}
\item[(i)]
\begin{enumerate}
\item[(a)] $H$ is associative,

\item[(b)] $H_2(H_1(x),H_1(x))=H_1(x)$ for every $x\in X$,

\item[(c)] $H_1$ and $H_2$ are nondecreasing in each argument, and

\item[(d)] the sets $H_1(X)=\ran(H_1)$, $H_2(X,z)$, and $H_2(z,X)$ are convex for every $z\in X$.
\end{enumerate}
\item[(ii)]
\begin{enumerate}
\item[(a)] $H$ is associative,

\item[(b)] $H$ is range-idempotent,

\item[(c)] for every $n\in\N$, $H_n$ is nondecreasing in each argument, and

\item[(d)] for every $n\in\N$, the set $H_n(\bfy,X,\bfz)$ is convex for every $(\bfy,\bfz)\in X^{n-1}$.
\end{enumerate}
\item[(iii)] There exist $a,b,c,d\in X$, with $a\leqslant c\wedge d$ and $c\vee d\leqslant b$, such that
$$
H_n(\bfx) ~=~ \med\left(a,(c\wedge x_1)\vee \med\left(\bigwedge_{i=1}^nx_i,c\wedge d,\bigvee_{i=1}^nx_i\right)\vee (d\wedge x_n),b\right),\qquad n\in\N.
$$
\end{enumerate}
\end{proposition}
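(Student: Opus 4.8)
The plan is to establish the cycle $(ii)\Rightarrow(i)\Rightarrow(iii)\Rightarrow(ii)$, which makes all three assertions equivalent. The implication $(ii)\Rightarrow(i)$ is just a restriction to small arities: $(i)(a)$ is $(ii)(a)$; for $(i)(b)$, applying range-idempotence to $H_1(x)\in\ran(H_1)\subseteq\ran(H^{\flat})$ gives $H_2(H_1(x),H_1(x))=H(2\Cdot H_1(x))=H_1(x)$; and $(i)(c)$, $(i)(d)$ are the cases $n\in\{1,2\}$ of $(ii)(c)$, $(ii)(d)$, since $\ran(H_1)$, $H_2(X,z)$ and $H_2(z,X)$ are exactly the sets $H_n(\bfy,X,\bfz)$ for those values of $n$.

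The substantive direction is $(i)\Rightarrow(iii)$. First I would pin down $H_1$. Since $H$ is associative it is unarily quasi-range-idempotent, and $(i)(a)$ together with $(i)(b)$ matches assertion (ii) of Lemma~\ref{lemma:7sdf5}, so $H$ is associative and range-idempotent; in particular it is unarily range-idempotent, whence $H_1\circ H_1=H_1$ and $H_1$ fixes $\ran(H_1)$ pointwise. Write $[a,b]=\ran(H_1)=\ran(H^{\flat})$, which is convex by $(i)(d)$ and, being the monotone image of the bounded chain, has attained endpoints $a=H_1(\bot)\leqslant b=H_1(\top)$. Monotonicity of $H_1$ then forces $H_1(x)=a$ for $x\leqslant a$ and $H_1(x)=b$ for $x\geqslant b$, i.e. $H_1(x)=\med(a,x,b)$. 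By Theorem~\ref{thm:sdf87fs} the binary part satisfies $H_2(x,y)=H_2(H_1(x),y)=H_2(x,H_1(y))$, so $H_2$ is determined by its restriction to $[a,b]^2$, where by $(i)(b)$ it is idempotent on the diagonal, and where it is associative, nondecreasing, and has convex sections by $(i)(d)$.

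The heart of the proof is the classification of this binary operation, and this is where essentially all the difficulty lies. I would set $c=H_2(b,a)$ and $d=H_2(a,b)$, note $a\leqslant c\wedge d$ and $c\vee d\leqslant b$ because $\ran(H_2)\subseteq\ran(H^{\flat})=[a,b]$, and then show that associativity together with monotonicity and the convexity of the sections forces
$$
H_2(x,y)=\med\Big(a,(c\wedge x)\vee\med(x\wedge y,c\wedge d,x\vee y)\vee(d\wedge y),b\Big).
$$
Concretely, I would split $[a,b]$ according to position relative to $c$ and to $d$, anchor the operation on the boundary through idempotence and the corner values $c=H_2(b,a)$, $d=H_2(a,b)$, and propagate these values into the interior using associativity. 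The convexity of each section $H_2(\cdot,z)$ and $H_2(z,\cdot)$ is precisely what rules out jumps and forces the interpolating median expression rather than a coarser threshold operation; without $(i)(d)$ more exotic associative idempotent monotone operations would be possible.

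Finally I would extend to arbitrary arity and close the cycle. For $(i)\Rightarrow(iii)$, the recursion $H_n(\bfx)=H_2(H_{n-1}(x_1,\ldots,x_{n-1}),x_n)$ from Theorem~\ref{thm:sdf87fs}, together with $\bigwedge_{i=1}^nx_i=(\bigwedge_{i<n}x_i)\wedge x_n$ and $\bigvee_{i=1}^nx_i=(\bigvee_{i<n}x_i)\vee x_n$, yields the $n$-ary formula by induction, the inductive step being a finite case analysis over the orderings of the elements of the chain. For $(iii)\Rightarrow(ii)$ I would verify the listed properties directly from the formula: monotonicity $(ii)(c)$ is immediate since $\wedge,\vee,\med$ are nondecreasing; convexity $(ii)(d)$ holds because, as a function of the single middle variable, $H_n$ is a unary lattice polynomial over a chain and so has interval range; and invariance by replication is immediate since $H_n(\bfx)$ depends only on $x_1$, $x_n$, $\bigwedge_ix_i$ and $\bigvee_ix_i$, none of which changes under replication, so $(ii)(b)$ follows from associativity via the recalled equivalence between range-idempotence and invariance by replication for associative operations. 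It then remains to check associativity $(ii)(a)$, a lattice-polynomial identity that reduces on a chain to a finite verification over the possible orderings of the variables. All of these closing steps are routine once the binary classification is in hand.
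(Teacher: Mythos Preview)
Your overall plan is sound, but it follows a genuinely different route from the paper. The paper establishes $(i)\Rightarrow(ii)$, records $(ii)\Rightarrow(i)$ as trivial, and then \emph{cites} the equivalence $(ii)\Leftrightarrow(iii)$ from \cite{CouMar11} rather than reproving it. The only new technical content in the paper's argument is the lifting step $(i)\Rightarrow(ii)$: Lemma~\ref{lemma:7sdf5} upgrades $(i)(b)$ to range-idempotence; monotonicity of all $H_n$ follows from Eq.~(\ref{eq:saf76sf5}); and convexity of every $H_n(\bfy,X,\bfz)$ is obtained by an induction whose engine is the observation that $H_2(C,z)$ is convex whenever $C\subseteq X$ is convex (using monotonicity of $H_2$ and convexity of $H_2(X,z)$).

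By contrast you go $(ii)\Rightarrow(i)\Rightarrow(iii)\Rightarrow(ii)$, so you never prove $(i)\Rightarrow(ii)$ as such, and in exchange you take on, in $(i)\Rightarrow(iii)$ and $(iii)\Rightarrow(ii)$, exactly the work that the paper delegates to \cite{CouMar11}: the classification of the binary part via the corner values $c=H_2(b,a)$, $d=H_2(a,b)$, the inductive extension to the $n$-ary formula, and the verification that the formula defines an associative, range-idempotent, monotone operation with convex sections. Your sketch of these steps is accurate in spirit, but the binary classification (``split according to position relative to $c$ and $d$ and propagate via associativity'') is the real substance of \cite{CouMar11} and is considerably longer than your outline suggests. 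The paper's organization has the virtue of isolating what is genuinely new here---that the arity-$1{,}2$ hypotheses in $(i)$ already imply the full arity-$n$ hypotheses in $(ii)$---while your route yields a self-contained proof at the price of redoing the cited characterization.
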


\begin{proof}
(i) $\Rightarrow$ (ii) Assume that $H\colon X^*\to X\cup\{\varepsilon\}$ satisfies condition (i). By Lemma~\ref{lemma:7sdf5}, $H$ is range-idempotent. Using Eq.~(\ref{eq:saf76sf5}), we immediately see that, for every $n\in\N$, $H_n$ is nondecreasing in each argument.

Now, let us show that, for every $z\in X$ and every convex subset $C$ of $X$, the set $H_2(C,z)$ is convex. Let $t\in X$ such that $H_2(y_0,z)< t< H_2(y_1,z)$ for some $y_0,y_1\in C$. Since $H_2$ is nondecreasing and $H_2(X,z)$ is convex, there is $u\in [y_0,y_1]\subseteq C$ such that $t=H_2(u,z)$. This shows that $H_2(C,z)$ is convex.

We now show by induction on $n$ that $H_n(\bfy,X,\bfz)$ is convex for every $(\bfy,\bfz)\in X^{n-1}$. This condition clearly holds for $n=1$ and $n=2$. Assume that it holds for some $n\geqslant 2$ and let us show that it still holds for $n+1$, that is, $H_{n+1}(\bfy,X,\bfz)$ is convex for every $(\bfy,\bfz)\in X^n$. If $\bfz=\varepsilon$, then since $H_2(z,X)$ is convex so is the set $H_{n+1}(\bfy,X)=H_2(H_n(\bfy),X)$. If $\bfz=(\bfv,z)$, then the set $C=H_n(\bfy,X,\bfv)$ is convex and hence so is the set $H_{n+1}(\bfy,X,\bfv,z)=H_2(C,z)$.

(ii) $\Rightarrow$ (i) Trivial.

(ii) $\Leftrightarrow$ (iii) This equivalence was proved in \cite{CouMar11}.
\end{proof}

\begin{remark}
If we set $c=d$ in any of the operations $H$ described in Proposition~\ref{prop:sda5f7sfdsa}(iii), then its restriction to $[a,b]^*$ is a \emph{$c$-median} \cite{GraMarMesPap09}, that is,
$$
H_n|_{[a,b]^n}(\bfx) ~=~ \med\left(\bigwedge_{i=1}^nx_i,c,\bigvee_{i=1}^nx_i\right).
$$
\end{remark}

We observe that any operation $H\colon X^*\to X\cup\{\varepsilon\}$ satisfying the conditions stated in Proposition~\ref{prop:sda5f7sfdsa} has a unary part of the form $H_1(x)=\med(a,x,b)$, which is not always one-to-one. We will therefore make use of Theorem~\ref{thm:FactoriAWRI-BPA237111} (instead of Corollary~\ref{cor:sfa5sfds}) to derive the following generalization of Proposition~\ref{prop:sda5f7sfdsa} to preassociative functions.

\begin{theorem}\label{thm:sda5f7sfdsa2}
Let $F\colon X^*\to Y$ be a standard function, where $X$ and $Y$ are chains and $X$ is bounded, and let $a,b\in X$ such that $a\leqslant b$. Then the following three assertions are equivalent.
\begin{enumerate}
\item[(i)]
\begin{enumerate}
\item[(a)] $F$ is preassociative and unarily quasi-range-idempotent,

\item[(b)] there exists a strictly increasing function $f\colon [a,b]\to Y$, with a convex range, such that $F_1(x)=(f\circ \med)(a,x,b)$ for all $x\in X$,

\item[(c)] $F_2(x,x)=F_1(x)$ for every $x\in X$,

\item[(d)] $F_2$ is nondecreasing in each argument, and

\item[(e)] the sets $F_2(X,z)$, and $F_2(z,X)$ are convex for every $z\in X$.
\end{enumerate}
\item[(ii)]
\begin{enumerate}
\item[(a)] $F$ is preassociative and unarily quasi-range-idempotent,

\item[(b)] there exists a strictly increasing function $f\colon [a,b]\to Y$, with a convex range, such that $F_1(x)=(f\circ \med)(a,x,b)$ for all $x\in X$,

\item[(c)] for every integer $n\geqslant 2$, we have $F_n(n\Cdot x)=F_1(x)$ for every $x\in X$,

\item[(d)] for every integer $n\geqslant 2$, $F_n$ is nondecreasing in each argument, and

\item[(e)] for every integer $n\geqslant 2$, the set $F_n(\bfy,X,\bfz)$ is convex for every $(\bfy,\bfz)\in X^{n-1}$.
\end{enumerate}
\item[(iii)] There exist $c,d\in [a,b]$ and a strictly increasing function $f\colon [a,b]\to Y$, with a convex range, such that
$$
F_n(\bfx) ~=~ (f\circ\med)\left(a,(c\wedge x_1)\vee \med\left(\bigwedge_{i=1}^nx_i,c\wedge d,\bigvee_{i=1}^nx_i\right)\vee (d\wedge x_n),b\right),\qquad n\in\N.
$$
In this case we have $f=F_1|_{[a,b]}$.
\end{enumerate}
\end{theorem}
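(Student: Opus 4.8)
The plan is to reduce the statement to Proposition~\ref{prop:sda5f7sfdsa} by means of the factorization supplied by Theorem~\ref{thm:FactoriAWRI-BPA237111}, and to prove the cycle $(iii)\Rightarrow(ii)\Rightarrow(i)\Rightarrow(iii)$. The implication $(ii)\Rightarrow(i)$ is immediate, since the clauses $(i)(a)$--$(i)(e)$ are exactly the specializations of $(ii)(a)$--$(ii)(e)$ to $n=2$: clause $(ii)(c)$ at $n=2$ reads $F_2(x,x)=F_1(x)$, and the sets $F_2(X,z)$, $F_2(z,X)$ in $(i)(e)$ are the sets $F_n(\bfy,X,\bfz)$ of $(ii)(e)$ with $n=2$ and $(\bfy,\bfz)\in\{(\varepsilon,z),(z,\varepsilon)\}$.

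For $(iii)\Rightarrow(ii)$ I would set $H_n(\bfx)=\med(a,(c\wedge x_1)\vee\med(\bigwedge_{i=1}^n x_i,c\wedge d,\bigvee_{i=1}^n x_i)\vee(d\wedge x_n),b)$, so that $F^{\flat}=f\circ H^{\flat}$. Since $c,d\in[a,b]$ we have $a\leqslant c\wedge d$ and $c\vee d\leqslant b$, so Proposition~\ref{prop:sda5f7sfdsa} makes $H$ an associative, range-idempotent $\varepsilon$-standard operation with $\ran(H^{\flat})=\ran(H_1)=[a,b]$. As $f$ is strictly increasing, hence one-to-one on $[a,b]$, Theorem~\ref{thm:FactoriAWRI-BPA237111} (using that $F$ is standard) gives $(ii)(a)$. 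Then $(ii)(b)$ follows from $F_1(x)=f(H_1(x))=f(\med(a,x,b))$; clause $(ii)(c)$ follows by feeding $n\Cdot x$ into the formula, since then $\bigwedge_{i=1}^n x_i=\bigvee_{i=1}^n x_i=x$ and both the inner median and the outer join collapse to $x$, yielding $F_n(n\Cdot x)=(f\circ\med)(a,x,b)=F_1(x)$; and $(ii)(d)$--$(ii)(e)$ follow from Proposition~\ref{prop:sda5f7sfdsa}(ii) together with the fact that a strictly increasing map with convex range carries convex sets to convex sets.

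The substantive direction is $(i)\Rightarrow(iii)$. From $F_1(x)=(f\circ\med)(a,x,b)$ one gets $\ran(F_1)=f([a,b])$, and $g=f^{-1}$ (extended over $f([a,b])$) lies in $Q(F_1)$, because $\med(a,\cdot,b)$ fixes $[a,b]$ so that $F_1(f^{-1}(y))=f(\med(a,f^{-1}(y),b))=y$. Theorem~\ref{thm:FactoriAWRI-BPA237111} then yields an associative $\varepsilon$-standard operation $H$ with $H^{\flat}=f^{-1}\circ F^{\flat}$, whence $H_1(x)=\med(a,x,b)$ and $\ran(H^{\flat})=[a,b]$. I would now check that $H$ satisfies Proposition~\ref{prop:sda5f7sfdsa}(i): associativity is part of the factorization; injectivity of $f$ applied to $(i)(c)$ gives $H_2(x,x)=H_1(x)$, and then $H_2(H_1(x),H_1(x))=H_2(x,x)=H_1(x)$ using $H_2(x,y)=H_2(H_1(x),y)=H_2(x,H_1(y))$, which holds by associativity of $H$; monotonicity and the convexity of $\ran(H_1)$, $H_2(X,z)$, $H_2(z,X)$ transfer from $(i)(c)$--$(i)(e)$ because $f^{-1}$ is strictly increasing with convex range $[a,b]$. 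Proposition~\ref{prop:sda5f7sfdsa} then produces $c,d$ and endpoints $a',b'$ with $H_n(\bfx)=\med(a',\dots,b')$; comparing the unary parts gives $[a',b']=\ran(H_1)=[a,b]$, hence $a'=a$ and $b'=b$. Composing with $f$ gives the formula of $(iii)$, and $f=F_1|_{[a,b]}$ since $F_1(t)=f(\med(a,t,b))=f(t)$ for $t\in[a,b]$.

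The main obstacle I anticipate is the bookkeeping in $(i)\Rightarrow(iii)$: selecting the quasi-inverse so that $H_1$ acquires exactly the median form, transferring the three convexity conditions back and forth across $f$ and $f^{-1}$ (this is precisely where the convex-range hypothesis on $f$ is indispensable, as it turns $f$ into an order isomorphism between the convex sets $[a,b]$ and $\ran(f)$), and matching the endpoints $a',b'$ emerging from Proposition~\ref{prop:sda5f7sfdsa} with the prescribed $a,b$.
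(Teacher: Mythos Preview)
Your proposal is correct and follows essentially the same route as the paper: prove the cycle $(iii)\Rightarrow(ii)\Rightarrow(i)\Rightarrow(iii)$, with the substantive step $(i)\Rightarrow(iii)$ handled by factorizing $F^{\flat}=f\circ H^{\flat}$ via Theorem~\ref{thm:FactoriAWRI-BPA237111} and then verifying that $H$ satisfies Proposition~\ref{prop:sda5f7sfdsa}(i). The only cosmetic differences are that the paper computes $H_2(H_1(x),H_1(x))=H_1(x)$ directly via $g\circ F_2$ and the quasi-inverse identity $F_1\circ g\circ F_1=F_1$, whereas you go through the intermediate step $H_2(x,x)=H_1(x)$ and then invoke $H_2(H_1(x),y)=H_2(x,y)$; and the paper spells out the convexity transfer by an explicit element-chase rather than appealing to the order-isomorphism principle you state. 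Your endpoint-matching argument ($[a',b']=\ran(H_1)=[a,b]$) makes explicit a step the paper leaves implicit.
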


\begin{proof}
(i) $\Rightarrow$ (iii) Let $F\colon X^*\to Y$ be a standard function satisfying condition (i) and take $g\in Q(F_1)$ such that $(g\circ F_1)(a)=a$ and $(g\circ F_1)(b)=b$ (this is always possible due to the form of $F_1$ and does not require AC). Thus, $g|_{\ran(F_1)}=f^{-1}$ is strictly increasing. Let $H\colon X^*\to X\cup\{\varepsilon\}$ be the $\varepsilon$-standard operation defined by $H^{\flat}=g\circ F^{\flat}$. By Theorem~\ref{thm:FactoriAWRI-BPA237111}, $H$ is associative and $F^{\flat}=f\circ H^{\flat}$.

Let us show that $H(H(x),H(x))=H(x)$ for every $x\in X$. Using condition (i)(c), we have $H_2(H_1(x),H_1(x))=(g\circ F_2)(H_1(x),H_1(x))=(g\circ F_1)(H_1(x))=(g\circ F_1\circ g\circ F_1)(x)=(g\circ F_1)(x)=H_1(x)$.

The function $H_1(x)=\med(a,x,b)$ is clearly nondecreasing. Since $F_2$ is nondecreasing in each argument, so is $H_2=g\circ F_2$.

The set $H_1(X)=\ran(H_1)=[a,b]$ is convex. Let us show that the set $H_2(X,z)$ is also convex for every $z\in X$. Let $t\in X$ such that $H_2(y_0,z)< t< H_2(y_1,z)$ for some $y_0,y_1\in X$. Since $\ran(H_2)\subseteq\ran(H_1)$, we have $t\in\ran(H_1)=[a,b]$. Therefore, since $f$ is increasing, we have
$$
F_2(y_0,z) ~=~ (f\circ H_2)(y_0,z) ~ \leqslant ~ f(t) ~ \leqslant ~ (f\circ H_2)(y_1,z) ~=~ F_2(y_1,z)
$$
and hence there exists $u\in X$ such that $f(t)=F_2(u,z)$. If follows that $t=(f^{-1}\circ F_2)(u,z)=H_2(u,z)$ and hence that the set $H_2(X,z)$ is convex. We show similarly that $H_2(z,X)$ is convex.

Thus, the operation $H$ satisfies the conditions stated in Proposition~\ref{prop:sda5f7sfdsa} and we have $\ran(H)=\ran(H_1)=[a,d]$.

(iii) $\Rightarrow$ (ii) Combining Theorem~\ref{thm:FactoriAWRI-BPA237111} and Proposition~\ref{prop:sda5f7sfdsa}, we obtain that $F$ is preassociative and unarily quasi-range-idempotent. Also, for every integer $n\geqslant 2$, $F_n$ is nondecreasing in each argument and we have $F_n(n\Cdot x) = (f\circ\med)(a,x,b) = F_1(x)$ for every $x\in X$. Finally, the set $F_n(\bfy,X,\bfz)=(f\circ H_n)(\bfy,X,\bfz)$ is convex for every $(\bfy,\bfz)\in X^{n-1}$ since $f$ is strictly increasing and both sets $\ran(f)$ and $H_n(\bfy,X,\bfz)$ (see Proposition~\ref{prop:sda5f7sfdsa}) are convex.

(ii) $\Rightarrow$ (i) Trivial.
\end{proof}

In the special case where $X$ is a real closed interval and $Y=\R$, the convexity conditions can be replaced with the continuity of the corresponding functions in both Proposition~\ref{prop:sda5f7sfdsa} and Theorem~\ref{thm:sda5f7sfdsa2}. Moreover, every $H_n$ (resp.\ $F_n$) is symmetric if and only if $c=d$. We then have the following corollary.

\begin{corollary}
Let $I$ be a real closed interval and let $[a,b]$ be a closed subinterval of $I$. A standard function $F\colon I^*\to\R$ is preassociative and unarily quasi-range-idempotent, there exists a continuous and strictly increasing function $f\colon [a,b]\to\R$ such that $F_1(x)=(f\circ \med)(a,x,b)$, $F_2$ is continuous, symmetric, nondecreasing in each argument, and satisfies $F_2(x,x)=F_1(x)$ for every $x\in I$ if and only if there exist $c\in [a,b]$ and a continuous and strictly increasing function $f\colon [a,b]\to\R$ such that
$$
F_n(\bfx) ~=~ (f\circ\med)\left(a,\med\left(\bigwedge_{i=1}^nx_i,c,\bigvee_{i=1}^nx_i\right),b\right),\qquad n\in\N.
$$
In this case we have $f=F_1|_{[a,b]}$.
\end{corollary}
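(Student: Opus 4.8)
The plan is to read this corollary off from Theorem~\ref{thm:sda5f7sfdsa2}, specialized to $X=I$ a real closed interval and $Y=\R$, where the convexity hypotheses become continuity and the symmetry requirement forces $c=d$. Two elementary facts do the translating. First, a monotone real function on a real interval is continuous if and only if its range is convex (an interval), since the only discontinuities of a monotone function are jumps and a jump is exactly a gap in the range. Second, as observed just before the statement, the explicit operation of Theorem~\ref{thm:sda5f7sfdsa2}(iii) has symmetric $n$-ary parts precisely when $c=d$.

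For necessity, I would convert the continuity assumptions into the convexity assumptions of Theorem~\ref{thm:sda5f7sfdsa2}(i). As $f$ is continuous and strictly increasing on the closed interval $[a,b]$, its range is $[f(a),f(b)]$ and hence convex, yielding hypothesis (i)(b); the monotonicity of $F_2$ in each argument is hypothesis (i)(d), and since $F_2$ is continuous every section $F_2(X,z)$ and $F_2(z,X)$ has an interval as its range, yielding (i)(e); hypotheses (i)(a) and (i)(c) are assumed outright. Theorem~\ref{thm:sda5f7sfdsa2} then produces $c,d\in[a,b]$ and a strictly increasing $f$ of convex range obeying formula (iii), with $f=F_1|_{[a,b]}$. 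Symmetry of $F_2$ passes to all $F_n$ and, by the second bridging fact, forces $c=d$. Writing $m=\bigwedge_{i=1}^n x_i$ and $M=\bigvee_{i=1}^n x_i$, a three-case check ($c\leqslant m$, then $m\leqslant c\leqslant M$, then $c\geqslant M$) shows that $(c\wedge x_1)\vee\med(m,c,M)\vee(c\wedge x_n)$ equals $m$, $c$, $M$ respectively, that is, equals $\med(m,c,M)$; this collapses (iii) to the stated formula. Finally $f$, being strictly increasing with convex range, is continuous by the first bridging fact.

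For sufficiency, the stated formula displays each $F_n$ as the composition of the continuous maps $\bigwedge$, $\bigvee$, $\med$ and the continuous $f$, so $F_2$ is jointly continuous, symmetric (as $c=d$), nondecreasing in each argument, and satisfies $F_2(x,x)=F_1(x)$; and since a continuous strictly increasing $f$ has convex range, the implication (iii)$\Rightarrow$(i) of Theorem~\ref{thm:sda5f7sfdsa2} delivers preassociativity, unary quasi-range-idempotence, and the required form of $F_1$. The only delicate point is bookkeeping around the continuity--convexity dictionary: in the necessity direction it is applied argumentwise to the sections of $F_2$ and globally to $f$, whereas the joint continuity of $F_2$ asserted in the hypotheses is recovered in the sufficiency direction directly from the explicit composition. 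No genuine difficulty beyond this arises, the substance of the result being already carried by Theorem~\ref{thm:sda5f7sfdsa2}.
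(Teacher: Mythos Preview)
Your proposal is correct and follows exactly the route the paper intends: the corollary is stated without proof because, as the paragraph just before it says, one specializes Theorem~\ref{thm:sda5f7sfdsa2} to $X=I$, $Y=\R$, trades the convexity hypotheses for continuity, and uses that symmetry forces $c=d$. The one phrase to tighten is ``Symmetry of $F_2$ passes to all $F_n$'': this is not needed (and not obvious a priori), since the bridging fact already gives $c=d$ from symmetry of $F_2$ alone; once $c=d$, your three-case computation collapses formula~(iii) and the rest goes through as you wrote.
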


\section{Concluding remarks and open problems}

In this paper we have first recalled the concept of preassociativity, a recently-introduced property which naturally generalizes associativity for variadic functions. Then, starting from known axiomatizations of noteworthy classes of associative operations, we have provided characterizations of classes of preassociative functions which are unarily quasi-range-idempotent. We observe that, from among these preassociative functions, the associative ones can be identified by using Proposition~\ref{prop:A-PA1}.

We end this paper by the following interesting questions:

\begin{enumerate}
\item Find interpretations of the preassociativity property in aggregation function theory and/or fuzzy logic.

\item Find new axiomatizations of classes of preassociative functions from existing axiomatizations of classes of associative operations. Classes of associative operations can be found in \cite{AlsFraSch06,Bac86,CouMar11,CouMar12,CraPal89,CzoDre84,Dom12,FodRou94,GraMarMesPap09,KlMes05,KleMesPap20,Lin65,Mar98,Mar00,MarMat11,MosShi57,San02,San05}.
\end{enumerate}

\section*{Acknowledgments}

The authors would like to thank the anonymous reviewers for helpful suggestions. This research is supported by the internal research project F1R-MTH-PUL-12RDO2 of the University of Luxembourg.


\end{document}